\numberwithin{equation}{section}
\newtheorem{de}{Definition}[section]
\newtheorem{thm}[de]{Theorem}
\newtheorem{cor}[de]{Corollary}
\newtheorem{prop}[de]{Proposition}
\newtheorem{lem}[de]{Lemma}
\newtheorem{rem}[de]{Remark}
\newcommand\E{\mathcal{E}}
\newcommand\F{\mathcal{F}}
\newcommand\V{\mathcal{V}}
\newcommand\G{\mathcal{G}}
\newcommand\C{\mathcal{C}}
\newcommand\N{\mathbb{N}}
\newcommand\M{\mathcal{M}}
\newcommand\trho{\tilde{\rho}}
\renewcommand{\textbf}[1]{\begingroup\bfseries\mathversion{bold}#1\endgroup}
\DeclareMathOperator*{\dive}{div}
\DeclareMathOperator*{\argmin}{argmin}
\DeclareMathOperator*{\Rn}{\mathbb{R}^\textit{n}}
\DeclareMathOperator*{\R}{\mathbb{R}}
\DeclareMathOperator*{\Pa}{\mathcal{P}}
\DeclareMathOperator*{\Paad}{\mathcal{P}^{ac}_2}
\newcommand\loc{\mathop{\mathrm{loc}}\nolimits}
\newcommand\id{\mathop{\mathrm{id}}\nolimits}
\title{A splitting method for nonlinear diffusions with nonlocal, nonpotential drifts}
\author {Guillaume Carlier \thanks{\scriptsize CEREMADE, UMR CNRS 7534, Universit\'e Paris IX
Dauphine, Pl. de Lattre de Tassigny, 75775 Paris Cedex 16, FRANCE
\texttt{carlier@ceremade.dauphine.fr}},
Maxime Laborde \thanks{\scriptsize CEREMADE, UMR CNRS 7534, Universit\'e Paris IX
Dauphine, Pl. de Lattre de Tassigny, 75775 Paris Cedex 16, FRANCE
\texttt{laborde@ceremade.dauphine.fr}.}}
\begin{document}

\maketitle

\begin{abstract}
We prove an existence result for nonlinear diffusion equations in the presence of a nonlocal density-dependent drift which is not necessarily potential. The proof is constructive and based on the Helmholtz decomposition of the drift and a splitting scheme. The splitting scheme combines transport steps by the divergence-free part of the drift and semi-implicit minimization steps \`a la Jordan-Kinderlherer Otto to deal with the potential part.

\end{abstract}

\textbf{Keywords:}  Wasserstein gradient flows, Jordan-Kinderlehrer-Otto scheme, splitting, nonlocal drift, nonlinear diffusions, Helmholtz decomposition.

\medskip

\textbf{MS Classification:} 35K15, 35K40.

\section{Introduction}\label{sec-intro}

In their seminal paper \cite{JKO}, Jordan-Kinderlehrer and Otto, showed that the Fokker-Planck equation
\[\partial_t \rho -\Delta \rho- \dive(\rho \nabla V)=0, \qquad \rho_{|t=0} = \rho_0\]
with a probability density $\rho_0$ as Cauchy datum can be viewed as the gradient flow for the Wasserstein metric of the relative entropy with respect to the equilibrium measure $\rho_\infty:=e^{-V}$, $E(\rho\vert \rho_\infty):= \int \rho \log(\rho/ \rho_\infty)$. They also introduced an implicit Euler-scheme (nowadays referred to as the JKO scheme): given a time step $h>0$ and starting from $\rho_h^0=\rho_0$, construct inductively a sequence $\rho_h^k$ by:
\[\rho_h^{k+1}\in \argmin \left\{\frac{1}{2h} W_2^2(\rho, \rho_h^k)+E(\rho\vert \rho_\infty)  \right\}\]
where $W_2$ denotes the $2$-Wasserstein distance (see section \ref{sec-prel}) and proved its convergence to the solution of the Fokker-Planck equation as $h\to 0$.  The theory of Wasserstein gradient flows has developed rapidly in the last twenty years with many applications for instance to porous medium equations \cite{O} or  aggregation equations \cite{CDFLS}. The reference textbook of Ambrosio, Gigli and Savar\'e \cite{AGS} gives a very detailed account of this powerful theory, which enables to study general nonlinear diffusion equations of the form
\[\partial_t \rho -\Delta P(\rho)- \dive(\rho \nabla V)=0 \mbox{ where } P(\rho):=\rho F'(\rho)-F(\rho),  \]
as Wasserstein gradient flows for the energy $\int (F(\rho)+V \rho)$. 

\medskip

The purpose of the present paper is to present a splitting transport-JKO scheme to study nonlinear diffusion equations (or more generally, systems) with a general density-dependent drift:
\begin{eqnarray*}
&\partial_t \rho(t,x)-\Delta P(\rho(t,x))    - \dive(\rho(t,x) U[\rho(t,.)](x))=0, \; t\ge 0, \; x\in \Omega, \\
&(\nabla P(\rho)+U[\rho])\cdot \nu=0 \mbox{ on $\partial \Omega$},\; \rho_{|t=0} = \rho_0,
\end{eqnarray*}
where for every probability density $\rho$, $U[\rho]$ is a-not necessarily potential- vector field, for instance, in even dimensions, it can mix a gradient and Hamiltonian structures i.e. be of the form $\nabla V[\rho]+ J  \nabla H[\rho]$ (where $J$ is the usual symplectic matrix). The potential case where $U[\rho]=\nabla V[\rho]$ can be studied by means of a semi-implicit JKO scheme introduced by Di Francesco and Fagioli \cite{DFF} in the nondiffusive case and further developed by Laborde \cite{L} for the case of a non linear diffusion.  The idea of our splitting scheme is natural and consists in performing a Helmholtz decomposition of $U[\rho]$. We then treat the divergence-free part purely by (continuous in time) transport and the potential part by the semi-implicit JKO scheme. For the transport steps of the splitting scheme, we essentially need the divergence-free part to have some Sobolev regularity in $x$ so as to be able to apply DiPerna-Lions theory, we will also need both the potential and divergence-free part of $\rho \mapsto U[\rho]$ to satisfy some Lipschitz continuity condition with respect to the Wasserstein distance. In our recent work \cite{CL}, we studied the same type of equations or systems (in the periodic in space setting) by quite different arguments (approximation  by uniformly parabolic equations). One advantage of the constructive splitting method presented here is that  the transport steps by a divergence-free vector field preserve the internal energy, this is one way to overcome some difficulties discussed in \cite{MesSan} (section 5, variant 3).

\medskip

The paper is organized as follows. Section \ref{sec-prel} recalls some results from optimal transport and DiPerna Lions theory. Section \ref{sec-assump} lists the various assumptions, explains the splitting scheme and gives the main result. Section \ref{sec-estimates} gives estimates on the discrete sequences of measures obtained by the splitting scheme. Convergence of the scheme as the time step goes to $0$ to a solution of the PDE is proved in section \ref{sec-proof}. In the concluding section \ref{sec-concl}, we briefly  discuss extension to systems and uniqueness issues.

\section{Preliminaries}\label{sec-prel}

\subsection{Wasserstein space}

We recall some results from optimal transport theory that we will use in the sequel, we refer the reader to the textbooks of Villani \cite{V1, V2}, Ambrosio, Gigli and Savar\'e \cite{AGS}  or Santambrogio \cite{S} for a detailed exposition. 
Let $\Omega$ be an open subset of $\Rn$, we denote by $\Pa(\Omega)$ the set of Borel probability measures on $\Omega$, $\Pa_2(\Omega):=\{\rho \in \Pa(\Omega) \; : \; M(\rho):=\int_{\Omega} \vert x \vert^2 d \rho(x) <+\infty\}$ and $\Paad(\Omega)$ the elements of $\Pa_2(\Omega)$ which in addition are absolutely continous with respect to the $n$-dimensional Lebesgue measure. Given $\rho_0$ and $\rho_1$ in $\Pa_2(\R^n)$ and denoting by $\Pi(\rho_0, \rho_1)$ the set of transport plans between $\rho_0$ and $\rho_1$ i.e. the set of Borel probability measures on $\Rn\times \Rn$ having $\rho_0$ and $\rho_1$ as marginals, the $2$-Wasserstein distance between $\rho_0$ and $\rho_1$, $W_2(\rho_0, \rho_1)$ is defined as the value of the optimal transport problem
\[W_2^2(\rho_0, \rho_1)=\inf_{\gamma\in \Pi(\rho_0, \rho_1)} \int_{\Rn\times \Rn} \vert x-y \vert^2 d \gamma(x,y).\]
Since this is a linear programming problem, it admits a dual formulation, which reads
\[\frac{1}{2}W_2^2(\rho_0, \rho_1)=\supÊ \Big\{ \int_{\Rn} \varphi d \rho_0 + \int_{\R^n} \psi d \rho_1 \; : \; \varphi(x)+\psi(y) \le \frac{1}{2} \vert x-y \vert^2\Big\}.\]
Optimal potentials in the problem above are called Kantorovich potentials between $\rho_0$ and $\rho_1$, they can be taken semi-concave and their existence is well-known (see \cite{V1, V2, AGS, S}). If $\rho_0 \in    \Paad(\Omega)$ a celebrated result of Brenier \cite{B} states that there is a unique optimal plan $\gamma$ between $\rho_0$ and $\rho_1$ and it is induced by a transport map $T$ i.e. is of the form $\gamma=(\id, T)_\# \rho_0$ and optimality is characterized by the fact that $T=\nabla u$ with $u$ convex, moreover $T(x)=x-\nabla \varphi(x)$ where $\varphi$ is the Kantorovich potential between $\rho_0$ and $\rho_1$. This in particular gives
\[W_2^2(\rho_0, \rho_1)=\int_{\Rn} \vert \nabla \varphi(x)\vert^2 \rho_0(x) dx.\]
Another important result is the Benamou-Brenier formula \cite{BB} which gives a dynamic formulation of $W_2^2(\rho_0, \rho_1)$ and expresses it as the infimum of the kinetic energy:
\[\int_0^1 \int_{\Rn} \vert v_t(x)\vert^2 d \rho_t(x) dt\]
among solutions of the continuity equation
\[\partial_t \rho + \dive(\rho v)=0, \;  \qquad \rho_{|t=0} = \rho_0, \;  \qquad \rho_{|t=1} = \rho_1.\]


\subsection{Flows of weakly differentiable vector fields}

We will also need to  apply the DiPerna Lions theory \cite{DPL} in the special case of divergence-free vector fields. Let $W\in W^{1,1}_{\loc}(\Rn, \Rn)$ be divergence-free $\dive(W)=0$ and with at most linear growth
\[\vert W(x)\vert \leqslant C(1+\vert x \vert).\]
Then there exists a unique flow map $X$ : $\R_+ \times \Rn\to \Rn$, $X\in C(\R_+, L^1_{\loc}(\Rn))$ such that
\begin{itemize}

\item $X(0,.)=\id$ and for a.e. $x$, $t\in \R_+\mapsto X(.,x)$ is a solution of the ODE $\dot{X}=W(X)$ i.e.:
\[X(t,x)=x+\int_0^t W(X(s,x)) ds, \; t\ge 0,\]

\item $X$ satisfies the group property $X(t, X(s,x))=X(t+s,x)$ for a.e. $x$ and every $t, s \ge 0$,

\item for every $X(t,.)$ preserves the $n$-dimensional Lebesgue measure.

\end{itemize}

Moreover given $\rho_0\in \Paad(\Rn)$, $\rho(t,.):=X(t,.)_\# \rho_0=\rho_0(X(t,.)^{-1})$ is the unique weak solution of the continuity equation
\begin{equation}\label{cauchytW}
\partial_t \rho+\dive(\rho W)=0, \; \qquad \rho_{|t=0} = \rho_0,
\end{equation}
which, since $W$ is divergence-free, can also be rewritten as the transport equation $\partial_t \rho +\nabla \rho \cdot W=0$. If we are given an open subset $\Omega$ of $\Rn$ with a smooth boundary and  which is tangential (in the sense of traces) to $\partial \Omega$, then the DiPerna-Lions flow $X$ leaves $\Omega$ invariant so that if $\rho_0\in \Paad(\Omega)$ (extended by $0$ outside $\Omega$, say), the solution $\rho_t=X(t,.)_\#\rho_0$ of \eqref{cauchytW} remains supported in $\Omega$ hence may be viewed as a curve with values in $\Paad(\Omega)$.

\section{Assumptions and main result}\label{sec-assump}

Given a suitable convex nonlinearity $F$ and its associated pressure $P(\rho)=\rho F'(\rho)-F(\rho)$ as well as a nonlocal drift $\rho \mapsto U[\rho]$, our goal is to solve 
\begin{eqnarray}
\label{eqution P2}
\partial_t \rho - \Delta P(\rho)- \dive(\rho U [\rho] ) = 0, \qquad \rho_{|t=0} = \rho_0,
\end{eqnarray}
on $(0,+\infty) \times \Omega$, where $\Omega$ is a smooth domain  of $\Rn$ (not necessary bounded), in case $\Omega$ has a boundary, the previous equation is supplemented with the no-flux boundary condition ($\nu$ denotes the outer unit normal to $\partial \Omega$):
\begin{equation}\label{no-flux}
(\nabla P(\rho)+ U[\rho]\rho)\cdot \nu =0 \mbox{ on } \partial \Omega.
\end{equation}

\smallskip

For every $\rho \in \Pa(\Omega)$, we assume that  the Helmholtz  decomposition of the vector field $U[\rho]$:
\begin{equation}
\label{helmholtz decomposition}
U[\rho]= -W[\rho] + \nabla V[\rho],
\end{equation}
with 
\[\nabla \cdot W[\rho] =0, \; W[\rho]\cdot \nu=0 \mbox{ on } \partial \Omega,\]
satisfies the following assumptions.

\bigskip

{\bf{Assumptions on the potential part $V$}}:
\begin{itemize}

\item $\nabla V[\rho] \in L^\infty_{\loc}$ uniformly in $\rho$ i.e for all $K \subset \subset \Omega$, there exists $C>0$ such that for all $\rho \in \Pa(\Omega)$,
\begin{equation}
\label{hyp V Linffty bound}
\| \nabla V[\rho] \|_{\infty,K} \leqslant C.
\end{equation} 
Note that by Rademacher's condition, this condition implies that $V[\rho]$ is differentiable a.e.,

\item $V[\rho]$ is semi-convex uniformly in $\rho$ i.e there exists $C$ such that for all $\rho \in \Pa(\Omega)$, for every $y \in \Omega$ and every $x\in \Omega$, point of differentiability of $V[\rho]$:
\begin{equation}
\label{hyp V semi-convex}
V[\rho](y) \geqslant V[\rho](x) + \langle \nabla V[\rho](x) , y-x \rangle -\frac{C}{2}|y-x|^2,
\end{equation}


\item there exists $C \geqslant 0$ such that for all $\rho \in \Pa(\Omega)$, for all $x \in \Omega$:
\begin{equation}
\label{hyp V bounded from below}
V[\rho](x) \geqslant - C(1+|x| ), 
\end{equation}

\item $\nabla V[\rho] \in L^2(\rho)$ uniformly in $\rho$, i.e there exists $C>0$ such that for all $\rho \in \Pa(\Omega)$,
\begin{equation}
\label{hyp V L2 bound}
\int_\Omega |\nabla V[\rho] |^2 \, d\rho \leqslant C,
\end{equation}
\item There exists $C>0$ such that for all $\rho,\mu \in \Pa(\Omega)$,
\begin{equation}
\label{hyp V cv L2}
\int_\Omega |\nabla V[\rho] - \nabla V[\mu] |^2 \, d\rho \leqslant C W_2^2(\rho,\mu),
\end{equation}

\end{itemize}

{\bf{Assumptions on the divergence-free part $W$}}: 
\begin{itemize}
\item there exists $C>0$ such that for all $\rho \in \Pa(\Omega)$ and 
\begin{equation}
\label{hyp W regularity + Linfty bound}
W[\rho] \in W^{1,1}_{\loc}(\Rn) \qquad \text{ and }  \qquad \vert  W[\rho](x) \vert \leqslant C(1+|x|), \mbox{ for all $x \in \Rn$}
\end{equation} 


\item There exists $C>0$ such that for all $\rho,\mu \in \Pa(\Omega)$,
\begin{equation}
\label{hyp W cv L2}
\int_\Omega | W[\rho] - W[\mu] |^2 \, d\rho \leqslant C W_2^2(\rho,\mu),
\end{equation}
\end{itemize}

{\bf{Assumptions on the internal energy $F$ and the associated pressure $P$}}:

The nonlinear diffusion term is given by a continuous strictly convex superlinear (i.e. $F(\rho)/\rho\to +\infty$ as $\rho\to +\infty$) function $F \, : \, \R^+ \rightarrow \R $ of class $\C^2((0,+\infty))$ which satisfies
\begin{equation}
\label{hyp F}
F(0)=0, \text{  and  } P(\rho) \leqslant C(\rho+F(\rho)).
\end{equation}
where $P(\rho):=\rho F'(\rho)-F(\rho)$ is the pressure associated to $F$. Moreover, we define $\F \, : \, \Pa(\Omega)\rightarrow \R$ by
$$ \F(\rho):=\left\{\begin{array}{ll}
\int_{\Omega} F(\rho(x)) \, dx  & \text{  if } \rho \ll \mathcal{L}^n,\\
+\infty & \text{  otherwise. }
\end{array}\right.$$
And we assume that

\begin{equation}
\label{hyp F lower bound }
\F(\rho) \geqslant -C(1+M(\rho))^\alpha, \qquad \text{ for all } \rho \in \Pa(\Omega),
\end{equation}
where $\alpha \in (0,1)$ and $M(\rho):= \int_{\Omega} |x|^2 \,d\rho(x)$ is the second moment of $\rho$.

\smallskip

The typical examples of energies we have in mind are $F(\rho):=\rho\log(\rho)$, which gives a linear diffusion driven by the laplacian, and $F(\rho):=\rho^m$ ($m>1$) which corresponds to the porous medium equation.

\bigskip

A weak solution of \eqref{eqution P2}-\eqref{no-flux} is a curve $\rho \, : \, t\in (0,+\infty) \mapsto \rho(t,\cdot) \in \Paad(\Omega)$ such that $\nabla P(\rho) \in \M^n ([0,+\infty) \times \Omega)$ and
\begin{equation}
\label{def sol}
\int_0^\infty \left( \int_\Omega (\partial_t \phi \rho -\nabla \phi \cdot U[\rho]\rho)dx-\int_\Omega \nabla \phi \cdot d\nabla P(\rho) \right) dt =-\int_\Omega \phi(0,x)\rho_0(x) \, dx,
\end{equation}
for every $\phi \in \C^\infty_c([0,+\infty)\times \Rn)$.

Our main result is the following:

\begin{thm}
\label{theorem existence P2}
Assume $\rho_0 \in \Paad(\Omega)$ such that 
\begin{equation}
\label{hyp CI}
\F(\rho_0) < +\infty,
\end{equation}
then \eqref{eqution P2} admits at least one weak solution.
\end{thm}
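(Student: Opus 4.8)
The plan is to construct the weak solution as a limit of the splitting scheme described in the introduction, following the classical JKO strategy but alternating transport and minimization half-steps. Fix a time step $h>0$. Starting from $\rho_h^0=\rho_0$, suppose $\rho_h^k$ has been constructed; first perform a transport step: let $\trho_h^k := X^k(h,\cdot)_\#\rho_h^k$ where $X^k$ is the DiPerna--Lions flow of the divergence-free field $W[\rho_h^k]$ (available by \eqref{hyp W regularity + Linfty bound} and the theory recalled in Section~\ref{sec-prel}); since this flow is measure-preserving, $\F(\trho_h^k)=\F(\rho_h^k)$ and one controls the growth of the second moment by the linear growth of $W$ via Gronwall. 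Then perform a semi-implicit JKO step:
\[
\rho_h^{k+1}\in\argmin\left\{\frac{1}{2h}W_2^2(\rho,\trho_h^k)+\F(\rho)+\int_\Omega V[\trho_h^k]\,d\rho\ :\ \rho\in\Paad(\Omega)\right\}.
\]
Existence of a minimizer follows from superlinearity of $F$ (to get a compactness/lower-semicontinuity argument in $\Paad$), the lower bounds \eqref{hyp V bounded from below} and \eqref{hyp F lower bound } controlling the energy from below by the second moment, and the standard direct method; uniqueness is not needed.

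Next I would derive the a priori estimates in Section~\ref{sec-estimates}. The key one is the energy/``total square distance'' estimate: writing the optimality of $\rho_h^{k+1}$ against the competitor $\trho_h^k$ gives
\[
\frac{1}{2h}W_2^2(\rho_h^{k+1},\trho_h^k)+\F(\rho_h^{k+1})+\int V[\trho_h^k]\,d\rho_h^{k+1}\le \F(\trho_h^k)+\int V[\trho_h^k]\,d\trho_h^k,
\]
and one must absorb the difference of the potential terms and of $\F$ across the transport step; here $\F$ is exactly preserved and the potential term difference is controlled by \eqref{hyp V Linffty bound}, \eqref{hyp V L2 bound} and the displacement estimate $W_2(\trho_h^k,\rho_h^k)\le Ch(1+\text{moments})$ coming from the flow. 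Summing over $k$ yields a uniform bound on $\F(\rho_h^k)$, on $\sum_k W_2^2(\rho_h^{k+1},\trho_h^k)/h$, and, combining with the transport displacement, on a discrete $H^1$-in-time-type quantity $\sum_k W_2^2(\rho_h^{k+1},\rho_h^k)/h$ — hence an (almost-)$\tfrac12$-Hölder bound in time for the piecewise-constant interpolant, giving equicontinuity and a refined-Ascoli/compactness statement in $C([0,T],\Pad(\Omega))$ with the narrow topology. Superlinearity of $F$ upgrades this to convergence of the densities in a suitable sense and, crucially, a bound on $\nabla P(\rho_h^k)$ in $L^1$ (or as measures) obtained from the flow-interchange / Euler--Lagrange equation of the JKO step: the first variation gives, for each $k$, $\frac{(\id-T_h^{k+1})}{h}=\nabla(F'(\rho_h^{k+1}))+\nabla V[\trho_h^k]$ in the weak sense, i.e. $\nabla P(\rho_h^{k+1})=\rho_h^{k+1}\frac{\id-T_h^{k+1}}{h}-\rho_h^{k+1}\nabla V[\trho_h^k]$, whose mass is controlled by Cauchy--Schwarz using the $W_2^2/h$ sum and \eqref{hyp V L2 bound}.

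Finally, in Section~\ref{sec-proof}, I would pass to the limit $h\to0$. Introduce the piecewise-constant interpolant $\rho_h(t)$ and a ``De Giorgi variational'' or geodesic/flow interpolant so as to write a discrete version of the weak formulation \eqref{def sol}: test the Euler--Lagrange relation against $\nabla\phi$, sum, and recognize the transport steps as producing the $-\nabla\phi\cdot W[\rho]\rho$ contribution (using that on a transport step $\partial_t\rho+\dive(\rho W)=0$ holds exactly) while the JKO steps produce the $-\nabla\phi\cdot\nabla V[\rho]\rho$ and $-\nabla\phi\cdot d\nabla P(\rho)$ contributions up to an $O(h)$ error from the semi-implicit freezing and from second-order Taylor terms in the optimal-transport displacement. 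The Wasserstein-Lipschitz assumptions \eqref{hyp V cv L2} and \eqref{hyp W cv L2} are exactly what is needed to replace $V[\trho_h^k]$ by $V[\rho_h(t)]$ and $W[\rho_h^k]$ by $W[\rho_h(t)]$ in the limit, since $W_2(\trho_h^k,\rho_h^k)\to0$ and $\rho_h\to\rho$ uniformly in $W_2$. Lower semicontinuity of $\F$ and weak-$*$ compactness of $\nabla P(\rho_h)$ as measures close the argument, and the no-flux condition \eqref{no-flux} is encoded automatically by testing with $\phi$ not vanishing on $\partial\Omega$ together with the tangency $W[\rho]\cdot\nu=0$ and the fact that the JKO step lives in $\Paad(\Omega)$.

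The main obstacle I expect is the estimate on $\nabla P(\rho)$ and its stability under the limit: getting the pressure gradient to be a (locally finite) measure requires carefully combining the Euler--Lagrange equation of the semi-implicit step with the flow-interchange technique (à la Matthes--McCann--Savaré) to get a genuine bound rather than just the formal identity, and then ensuring that the transport half-steps — which do not regularize and only conserve $\F$ — do not destroy this bound. A secondary technical point is the bookkeeping of the accumulated $O(h)$ errors from the semi-implicit freezing of $V[\cdot]$ and $W[\cdot]$ over $O(1/h)$ steps; this is precisely where the quantitative Wasserstein-Lipschitz bounds \eqref{hyp V cv L2}, \eqref{hyp W cv L2} and the displacement bound from the DiPerna--Lions flow must be used with the right powers of $h$.
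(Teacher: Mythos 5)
Your construction is the paper's: the same transport/semi-implicit JKO alternation, the same competitor argument for the a priori bounds (using conservation of $\F$ under the divergence-free flow and the semi-convexity and $L^2(\rho)$ bounds on $\nabla V$), the same first-variation Euler--Lagrange identity giving $\nabla P(\rho_h^{k+1})$ as an $L^1$ vector field controlled by $W_2(\rho_h^{k+1},\trho_h^{k+1})/h$ plus $\int|\nabla V[\trho_h^{k+1}]|\rho_h^{k+1}$, and the same bookkeeping of the frozen-coefficient errors via \eqref{hyp V cv L2} and \eqref{hyp W cv L2}. (Incidentally, no flow-interchange argument is needed: the plain Euler--Lagrange computation you wrote is exactly what the paper uses to get $P(\rho_h^{k+1})\in W^{1,1}(\Omega)$ and the $L^1_t W^{1,1}_x$ bound.)

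The genuine gap is in the passage to the limit in the nonlinear term. You propose to conclude from ``lower semicontinuity of $\F$ and weak-$*$ compactness of $\nabla P(\rho_h)$ as measures'', with the densities converging ``in a suitable sense'' thanks to superlinearity of $F$. Superlinearity plus the uniform $W_2$ (narrow) convergence only yields weak $L^1$ compactness of $\rho_h$ (Dunford--Pettis), and weak convergence does not commute with the nonlinearity: a weak-$*$ limit of $\nabla P(\rho_h)$ need not be $\nabla P(\rho)$, and lower semicontinuity of $\F$ identifies nothing here. What is needed -- and what the paper supplies -- is \emph{strong} (hence a.e.) convergence of $\rho_h$, obtained from a refined Aubin--Lions compactness theorem of Rossi--Savar\'e applied with $B=L^1(\Omega)$, the coercive integrand $\G(\rho)=\F(\rho)+\|P(\rho)\|_{BV(\Omega)}+M(\rho)$ (whose sublevels are $L^1$-compact precisely because of the $BV$ bound on $P(\rho)$ coming from the Euler--Lagrange estimate) and the pseudo-distance $g=W_2$, the time-equicontinuity being your discrete H\"older estimate. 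Even then one more step is required to pass to the limit in $P(\rho_h)$ itself and in $\nabla P(\rho_h)$: the paper interpolates $L^\infty_t L^1_x\cap L^1_t L^{n/(n-1)}_x$ into $L^{(n+1)/n}_{t,x}$ to get uniform integrability of $P(\rho_h)$, applies Vitali to get $P(\rho_h)\to P(\rho)$ in $L^1_{\loc}$, and proves a tightness estimate on $\nabla P(\rho_h)$ (again from the Euler--Lagrange identity) to upgrade vague to narrow convergence toward $\nabla P(\rho)$. Without this block your argument cannot identify the limit of the diffusion term, so as written the proof is incomplete at its most delicate point.
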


The proof of this theorem is given in the next sections and is based on the following splitting scheme that combines pure transport steps by the divergence-free part of the drift $U$ and  Wasserstein gradient flow steps taking into account the potential $V$ in a semi-implicit way. More precisely, given a time step $h>0$, we construct by induction a sequence $\rho_h^k \in \Paad(\Omega)$ by setting $\rho_h^0=\rho_0$ and given $\rho_h^k$ we find $\rho_h^{k+1}$ using the following scheme:

\begin{itemize}
\item \textbf{pure transport phase:} we introduce an intermediate measure, $\tilde{\rho}_h^{k+1}$ (with $\tilde{\rho}_h^0=\rho_0$) defined by
\begin{equation}
\label{scheme free transport}
\tilde{\rho}_h^{k+1} = {X_h^k(h,\cdot)}_{\#}\rho_h^k,
\end{equation}
where $X_h^k$ is solution of 
\begin{eqnarray}
\label{eqution transport}
\left\{\begin{array}{l}
\partial_t X_h^k = W[\rho_h^k] \circ X_h^k,\\
X_h^k(0,\cdot) = \id.
\end{array}\right.
\end{eqnarray}
Since $W[\rho_h^k]$ satisfies \eqref{hyp W regularity + Linfty bound}, as recalled in section \ref{sec-prel},  DiPerna-Lions theory \cite{DPL} implies that $X_h^k$ is well defined. Moreover, since $W[\rho_h^k]$ is divergence-free then $X_h^k$ preserves the Lebesgue measure and leaves the domain $\Omega$ invariant thanks to the fact that $W[\rho]$ is tangential to $\partial \Omega$. Therefore $\tilde{\rho}_h^{k+1}=\rho_h^k( {X_h^k}^{-1} )$ which implies the conservation of the internal energy:
\begin{eqnarray}
\F(\tilde{\rho}_h^{k+1}) &= & \int_\Omega F(\rho_h^k( {X_h^k}^{-1}(x) )) \, dx \nonumber \\
&=&\int_\Omega F(\rho_h^k( x )) \, dx = \F(\rho_h^k)  \label{conservation energy}
\end{eqnarray}

In addition, we can see $\tilde{\rho}_h^{k+1}$ is the value at time $h$ of the solution $\mu$ of the continuity equation
\begin{eqnarray}
\label{scheme continuity equation}
\left\{ \begin{array}{l}
\partial_t \mu +\dive(\mu W[\rho_h^k])=0,\\
\mu_{|t=0} = \rho_h^k.
\end{array}\right.
\end{eqnarray}

Thanks to these observations, we can easily  control the $W_2$-distance between $\tilde{\rho}_h^{k+1}$ and $\rho_h^k$. Indeed, using Benamou-Brenier formula and \eqref{hyp W regularity + Linfty bound}, we obtain
\begin{eqnarray*}
W_2^2(\tilde{\rho}_h^{k+1},\rho_h^k) & \leqslant & h\int_0^h \int_\Omega |W[\rho_h^k]|^2 \,d \mu_t dt\\
&\leqslant & Ch\int_0^h \int_\Omega (1+|x|^2) \,d \mu_t dt\\
&\leqslant & Ch\int_0^h \left( 1+M(\mu_t) \right) \,dt
\end{eqnarray*}
Moreover,
\begin{eqnarray*}
\frac{d}{dt}M(\mu_t) & = & \int_\Omega |x|^2 \partial_t \mu_t\\
& = & -\int_\Omega |x|^2 \dive(W[\rho_h^k] \mu_t)\\
& = & 2\int_\Omega x \cdot W[\rho_h^k] \mu_t\\
&\leqslant & C(M(\mu_t) +1).
\end{eqnarray*}
We obtain the last line using \eqref{hyp W regularity + Linfty bound}, Cauchy-Schwarz inequality and Young's inequality. Then,
 $$ M(\mu_t) \leqslant C(t+1)e^{Ct}\leqslant 2Ce^C \text{  for } t\leqslant 1,$$
which implies
\begin{equation}
\label{estimation distance transport libre}
W_2^2(\tilde{\rho}_h^{k+1},\rho_h^k) \leqslant Ch^2.
\end{equation} 


\item \textbf{semi-implicit JKO scheme:} In the second step we use a semi-implict version of the Jordan-Kinderlehrer-Otto scheme \cite{JKO}, introduced by Di Francesco and Fagioli in \cite{DFF} and used in \cite{L}, with $\tilde{\rho}_h^{k+1}$ being the measure defined in the previous step. More precisely, we select $\rho_h^{k+1}$ as a solution of
\begin{equation}
\label{scheme semi-impl JKO}
\inf_{\rho \in \Paad(\Omega) } \E_h(\rho |\trho_h^{k+1}) :=  W_2^2(\rho,\tilde{\rho}_h^{k+1}) +2h\left( \F(\rho) +\V(\rho|\tilde{\rho}_h^{k+1}) \right) ,
\end{equation} 
where
$$ \V(\rho|\mu):=\int_\Omega V[\mu] \,d\rho.$$
By standard compactness and lower semicontinuity argument, \eqref{scheme semi-impl JKO} admits at least one solution (see for example \cite{JKO,L}) so the sequence $\rho_h^k$ is well defined (it is even actually unique  by strict convexity of $ \E_h(. |\trho_h^{k+1})$).
\end{itemize}

To summarize, given a time step $h>0$, we construct by induction two sequences $\rho_h^k$ and $\trho_h^k$ with the following splitting scheme: $\rho_h^0=\trho_h^0=\rho_0$ and for all $k \geqslant 0$,
\begin{eqnarray}
\label{sechem entier}
\left\{ \begin{array}{l}
\trho_h^{k+1} = X_h^k(h,\cdot)_{\#} \rho_h^k,\\
\rho_h^{k+1} \in \argmin_{\rho \in \Paad(\Omega)} \left\{ W_2^2(\rho,\tilde{\rho}_h^{k+1}) +2h\left( \F(\rho) +\V(\rho|\tilde{\rho}_h^{k+1}) \right) \right\}.
\end{array}\right.
\end{eqnarray}

We finally introduce three different interpolations:
\begin{itemize}
\item We denote $\rho_h$ the usual piecewise constant interpolation of the sequence $\rho_h^k$
\begin{equation}
\label{interpolation usual JKO}
\rho_h(t,\cdot):= \rho_h^{k+1} \qquad \text{ if } t \in (hk,h(k+1)],
\end{equation}

\item similarly, we interpolate in a piecewise constant way the sequence $\trho_h^k$:
\begin{equation}
\label{interpolation perturbation 1}
\trho_h^1(t,\cdot):= \trho_h^{k+1} \qquad \text{ if } t \in (hk,h(k+1)],
\end{equation}

\item finally, we denote by $\trho_{h}^2$ the \emph{continuous} interpolation of $\trho_h^k$
\begin{equation}
\label{interpolation perturbation 2}
\trho_h^2(t,\cdot):= X_h^k(t-hk,\cdot)_{\#}\rho_h^k \qquad \text{ if } t \in (hk,h(k+1)].
\end{equation}
We remark that on $(hk,h(k+1)]$, $\trho_h^2$ is the solution on $(0,h)$ of the continuity equation \eqref{scheme continuity equation}.

\end{itemize}


The next two sections are devoted to the proof of theorem \ref{eqution P2}. In section \ref{sec-estimates}, we derive various estimates on the sequences generated by the splitting scheme above, in particular thanks to the Euler-Lagrange equation of the semi-implicit JKO steps. This enables us to pass to the limit as the time step goes to $0$ (the difficult term being of course the nonlinear pressure term) and thus to conclude the existence proof, this is done in section \ref{sec-proof}.

\section{Estimates}\label{sec-estimates}


\subsection{Basic a priori estimates}
Using the semi-implicit JKO scheme we first obtain the following  a priori estimates on $\rho_h$, $\trho_{h}^1$ and $\trho_{h}^2$.

\begin{prop}
There exists $h_0>0$, such that for $T>0$,  there exists $C>0$ such that, for all $h,k$, with $h\in (0, h_0)$ and  $hk<T$, $N=\lceil \frac{T}{h } \rceil$, we have
\begin{eqnarray}
& M(\rho_h^k) \leqslant C, \label{estimation moment}\\
& \F(\rho_{h}^k) \leqslant C,\label{estimation fonctional}\\
&\sum_{k=0}^{N-1} W_2^2(\trho_{h}^{k+1},\rho_{h}^{k+1}) \leqslant Ch.\label{estimation distance}
\end{eqnarray}

\end{prop}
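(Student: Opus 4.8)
The strategy is the standard "energy dissipation along a minimizing-movement scheme" argument, adapted to the splitting structure; the only genuinely new ingredient is that the transport phase has to be controlled separately using \eqref{estimation distance transport libre} and the energy conservation \eqref{conservation energy}. The plan is to first write down the inequality coming from the fact that $\rho_h^{k+1}$ is a minimizer in \eqref{scheme semi-impl JKO}, comparing with the competitor $\rho=\trho_h^{k+1}$. This yields
\[
W_2^2(\rho_h^{k+1},\trho_h^{k+1})+2h\bigl(\F(\rho_h^{k+1})+\V(\rho_h^{k+1}\vert\trho_h^{k+1})\bigr)\leqslant 2h\bigl(\F(\trho_h^{k+1})+\V(\trho_h^{k+1}\vert\trho_h^{k+1})\bigr).
\]
Using the energy conservation of the transport step, $\F(\trho_h^{k+1})=\F(\rho_h^k)$, and rearranging, this gives a one-step dissipation estimate controlling $W_2^2(\rho_h^{k+1},\trho_h^{k+1})$ together with the increment $\F(\rho_h^{k+1})-\F(\rho_h^k)$, up to the potential terms $\V(\rho_h^{k+1}\vert\trho_h^{k+1})-\V(\trho_h^{k+1}\vert\trho_h^{k+1})=\int V[\trho_h^{k+1}]\,d(\rho_h^{k+1}-\trho_h^{k+1})$, which I bound using the uniform local Lipschitz bound \eqref{hyp V Linffty bound} on $\nabla V$ (more precisely, using semi-convexity \eqref{hyp V semi-convex} and the lower bound \eqref{hyp V bounded from below}, or directly a Lipschitz estimate on a Kantorovich plan between $\rho_h^{k+1}$ and $\trho_h^{k+1}$) against $W_2(\rho_h^{k+1},\trho_h^{k+1})$, then absorbing via Young's inequality into the $W_2^2$ term on the left, at the cost of a term of order $h$ times a power of the moments.

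Next I would control the second moment. Combining the transport-step moment growth already established in the text (giving $M(\trho_h^{k+1})\leqslant e^{Ch}(M(\rho_h^k)+Ch)$, say), with the bound $M(\rho_h^{k+1})\leqslant (1+\delta)M(\trho_h^{k+1})+C_\delta W_2^2(\rho_h^{k+1},\trho_h^{k+1})$ that follows from the triangle inequality for $W_2$ and $M(\rho)^{1/2}=W_2(\rho,\delta_0)$, I get a recursion of the form $M(\rho_h^{k+1})\leqslant (1+Ch)M(\rho_h^k)+Ch+C W_2^2(\rho_h^{k+1},\trho_h^{k+1})$. Summing the one-step dissipation inequality over $k$ from $0$ to $N-1$ telescopes the $\F$ increments and produces $\sum_k W_2^2(\rho_h^{k+1},\trho_h^{k+1})\leqslant C + Ch\sum_k (1+M(\rho_h^k))^{\beta}$ using the lower bound \eqref{hyp F lower bound } on $\F$ to control the telescoped $\F(\rho_h^N)$ from below by $-C(1+\max_k M(\rho_h^k))^\alpha$ with $\alpha<1$. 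Feeding this back into the moment recursion and applying a discrete Gronwall argument — here the superlinear-in-nothing, sublinear-in-$M$ growth ($\alpha<1$) is exactly what prevents blow-up — yields the uniform bound \eqref{estimation moment} for $hk<T$, provided $h<h_0$ with $h_0$ small enough that the Young/absorption constants behave. Once $M(\rho_h^k)\leqslant C$ is in hand, \eqref{estimation fonctional} follows immediately from the same telescoped sum together with the lower bound \eqref{hyp F lower bound }, and \eqref{estimation distance} follows from the summed dissipation inequality with the right-hand side now bounded by $Ch$.

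The main obstacle, and the step requiring the most care, is the interplay between the moment estimate and the distance estimate: each is needed to prove the other, so they must be closed simultaneously by a single Gronwall-type induction on $k$, and one must check that the constant $C$ obtained does not blow up as $h\to 0$ and that $h_0$ can be chosen uniformly. The delicate points inside this are: (i) that the sublinear growth exponent $\alpha\in(0,1)$ in \eqref{hyp F lower bound } is genuinely used to close the loop (a linear lower bound would not suffice without extra work), (ii) that the potential cross-term $\int V[\trho_h^{k+1}]\,d(\rho_h^{k+1}-\trho_h^{k+1})$ is controlled only after noting $\rho_h^{k+1}$ and $\trho_h^{k+1}$ have uniformly bounded moments, hence are supported (in the relevant sense) where the local bounds on $\nabla V$ apply — or, if $\Omega$ is unbounded, using \eqref{hyp V L2 bound} and \eqref{hyp V bounded from below} instead of a pointwise Lipschitz bound — and (iii) keeping track of the contribution of the transport phase, where \eqref{estimation distance transport libre} ($W_2^2(\trho_h^{k+1},\rho_h^k)\leqslant Ch^2$) lets one pass freely between $\rho_h^k$ and $\trho_h^{k+1}$ at the cost of $O(h^2)$ per step, i.e. $O(h)$ after summation.
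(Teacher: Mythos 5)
Your proposal is correct and follows essentially the same route as the paper: compare with the competitor $\trho_h^{k+1}$, use conservation of $\F$ under the divergence-free transport step to telescope, control the cross term $\int_\Omega V[\trho_h^{k+1}]\,d(\rho_h^{k+1}-\trho_h^{k+1})$ along an optimal plan via the semi-convexity \eqref{hyp V semi-convex} together with the $L^2$ bound \eqref{hyp V L2 bound} (this is the variant the paper actually uses; the purely local bound \eqref{hyp V Linffty bound} alone would not suffice on an unbounded $\Omega$), absorb by Young's inequality for $h\le h_0$, and close the moment and distance estimates simultaneously using the sublinear lower bound \eqref{hyp F lower bound } with $\alpha\in(0,1)$. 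The explicit moment/distance bootstrap you describe is precisely what the paper delegates to ``standard arguments'' following \cite{JKO,L}.
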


\begin{proof}
Using $\trho_h^{k+1}$ as a competitor of $\rho_h^{k+1}$ in  \eqref{scheme semi-impl JKO}, we obtain
\begin{equation}
\label{inequality opt JKO scheme}
\frac{1}{2h}W_2^2(\rho_h^{k+1},\trho_h^{k+1}) \leqslant \F(\trho_h^{k+1})-\F(\rho_h^{k+1}) +\int_\Omega V[\trho_h^{k+1}] \left( \trho_h^{k+1}-\rho_h^{k+1} \right).
\end{equation}

Let $\gamma$ be the optimal transport plan between $\trho_h^{k+1}$ and $\rho_h^{k+1}$. Then we have
\begin{eqnarray*}
\int_\Omega V[\trho_h^{k+1}] \left( \trho_h^{k+1}-\rho_h^{k+1} \right) & =& \int_\Omega \left( V[\trho_h^{k+1}](x) -V[\trho_h^{k+1}](y) \right) \,d\gamma(x,y)\\
&=& \int_\Omega \left( V[\trho_h^{k+1}](x) -V[\trho_h^{k+1}](y)  + \nabla V[\trho_h^{k+1}](x) \cdot (y-x) \right) \,d\gamma(x,y)\\
&-& \int_\Omega \nabla V[\trho_h^{k+1}](x) \cdot (y-x) \,d\gamma(x,y).
\end{eqnarray*}

Using \eqref{hyp V semi-convex} for the first part and Cauchy-Schwarz inequality and \eqref{hyp V L2 bound} for the second part of the right hand side, we find
\begin{eqnarray*}
\int_\Omega V[\trho_h^{k+1}] \left( \trho_h^{k+1}-\rho_h^{k+1} \right) &\leqslant &  \frac{C}{2}  \int_\Omega |x-y|^2 \,\gamma(x,y)  + C \Big(\int_\Omega |x-y|^2 \,\gamma(x,y)\Big)^{\frac{1}{2}}
    \\
&= & \frac{C}{2}W_2^2(\trho_h^{k+1},\rho_h^{k+1})+ CW_2(\trho_h^{k+1},\rho_h^{k+1}).
\end{eqnarray*}
Choosing $h\le h_0 \le \frac{1}{2C}$ and using Young's inequality
\[W_2(\trho_h^{k+1},\rho_h^{k+1}) \le \frac{1}{8hC} W_2^2(\trho_h^{k+1},\rho_h^{k+1})+ 2 Ch,\]
\eqref{inequality opt JKO scheme} becomes
$$\frac{1}{8h}W_2^2(\rho_h^{k+1},\trho_h^{k+1}) \leqslant \F(\trho_h^{k+1})-\F(\rho_h^{k+1}) +Ch.$$

Now using \eqref{conservation energy}, to recover a telescopic sum, and summing over $k$, we obtain

\begin{eqnarray*}
\sum_{k=0}^{N-1} W_2^2(\rho_h^{k+1},\trho_h^{k+1}) \leqslant 8h \left( \F(\rho_0) - \F(\rho_h^N) +CT \right),
\end{eqnarray*}

this inequality and \eqref{hyp CI} imply \eqref{estimation fonctional}. In addition, since the lower bound of $\F$ is controlled by the second moment, 
\begin{equation}
\label{ineq. pour borne moment}
\sum_{k=0}^{N-1} W_2^2(\rho_h^{k+1},\trho_h^{k+1}) \leqslant 8 h \left( \F(\rho_0) + C(1+M(\rho_h^N))^\alpha +CT \right).
\end{equation}

But, with \eqref{ineq. pour borne moment} and by standard arguments (see \cite{JKO,L}), we deduce that $M(\rho_h^k)$ satisfies \eqref{estimation moment} and then  \eqref{ineq. pour borne moment}, \eqref{hyp CI} and \eqref{estimation moment} give \eqref{estimation distance}. 
\end{proof}

\begin{rem}
\label{estimation trho}
Using  estimate \eqref{estimation distance transport libre} between $\trho_h^{k+1}$ and $\rho_h^k$, and \eqref{estimation distance}, we also have
\begin{eqnarray*}
\sum_{k=0}^{N-1} W_2^2(\rho_h^{k},\rho_h^{k+1}) \leqslant Ch \text{ and } \sum_{k=0}^{N-1} W_2^2(\trho_h^{k},\trho_h^{k+1}) \leqslant Ch.
\end{eqnarray*}
Moreover, using \eqref{conservation energy}, we have for all $t \in [0,T]$,
\begin{eqnarray*}
\F(\trho_h^1(t)),\F(\trho_h^2(t)) \leqslant C \qquad \text{ and } \qquad M(\trho_h^1(t)),M(\trho_h^2(t)) \leqslant C.
\end{eqnarray*}

\end{rem}

\subsection{Discrete Euler-Lagrange equation and stronger estimates}

Let us start with the Euler-Lagrange equation of \eqref{scheme semi-impl JKO}.

\begin{prop}
\label{equality a.e}
For all $k\geqslant 0$, we have $P(\rho_{h}^{k+1}) \in W^{1,1}(\Omega)$ and
\begin{eqnarray}
\label{equality a.e 1}
h\left(\nabla V[\trho_h^{k+1}] \rho_{h}^{k+1}+\nabla P(\rho_{h}^{k+1})\right)=-\nabla \varphi_{h}^{k+1} \rho_{h}^{k+1}  \qquad \text{      a.e},
\end{eqnarray}
where $\varphi_{h}^{k+1}$ is a  Kantorovich potential from $\rho_{h}^{k+1}$ to $\trho_{h}^{k+1}$ (so that its gradient is unique $\rho_{h}^{k+1}$-a.e.) for $W_2$.\end{prop}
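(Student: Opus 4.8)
The plan is to derive the Euler--Lagrange equation for the semi-implicit JKO step \eqref{scheme semi-impl JKO} by performing perturbations of the optimal $\rho_h^{k+1}$ and exploiting the first-order optimality conditions. Write $\rho:=\rho_h^{k+1}$, $\mu:=\trho_h^{k+1}$ and $V:=V[\mu]$, so that $\rho$ minimizes $\rho\mapsto W_2^2(\rho,\mu)+2h(\F(\rho)+\int_\Omega V\,d\rho)$ over $\Paad(\Omega)$. The standard device (going back to \cite{JKO} and used in \cite{L}) is to use flow perturbations: pick a smooth, compactly supported vector field $\xi\in\C^\infty_c(\Omega,\R^n)$, let $\Phi_s$ be its flow ($\Phi_0=\id$, $\partial_s\Phi_s=\xi\circ\Phi_s$), and set $\rho_s:=(\Phi_s)_\#\rho$. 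Since $\rho_s$ is an admissible competitor for every small $s$, the function $s\mapsto \E_h(\rho_s|\mu)$ has a minimum at $s=0$, hence nonnegative derivative from the right (and by considering $-\xi$ one gets the derivative is zero).

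The next step is to compute the three derivatives at $s=0$. For the Wasserstein term, the standard first-variation formula gives $\frac{d}{ds}\big|_{s=0}\tfrac12 W_2^2(\rho_s,\mu)=\int_\Omega \nabla\varphi\cdot\xi\,d\rho$, where $\varphi=\varphi_h^{k+1}$ is the Kantorovich potential from $\rho$ to $\mu$; this requires a short justification (the map $s\mapsto \tfrac12 W_2^2(\rho_s,\mu)$ is differentiable because $\rho_0\in\Paad$ admits a unique optimal plan, cf.\ the super/sub-differential computations in \cite{AGS} or \cite{S}). For the potential term, $\frac{d}{ds}\big|_{s=0}\int_\Omega V\,d\rho_s=\int_\Omega \nabla V\cdot\xi\,d\rho$, which is legitimate since $\nabla V\in L^\infty_{\loc}$ by \eqref{hyp V Linffty bound} and $\xi$ has compact support. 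For the internal energy, using the change of variables $\rho_s=\rho\circ\Phi_s^{-1}\,\det(D\Phi_s^{-1})$ and the superlinearity/convexity of $F$, one gets $\frac{d}{ds}\big|_{s=0}\F(\rho_s)=-\int_\Omega P(\rho)\,\dive\xi\,dx$; this is the classical computation and the place where one must be slightly careful about integrability, but it is handled exactly as in \cite{JKO,L}. Collecting terms and dividing by $2$, optimality yields
\[
\int_\Omega \nabla\varphi\cdot\xi\,d\rho + h\int_\Omega \nabla V\cdot\xi\,d\rho - h\int_\Omega P(\rho)\,\dive\xi\,dx = 0
\]
for all $\xi\in\C^\infty_c(\Omega,\R^n)$. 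This says precisely that the distribution $\nabla P(\rho)$ equals $-\tfrac1h\nabla\varphi\,\rho-\nabla V\,\rho$ as an element of $\M^n$, i.e.\ $h(\nabla V[\mu]\rho+\nabla P(\rho))=-\nabla\varphi\,\rho$ in the sense of distributions, which is \eqref{equality a.e 1}.

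It remains to upgrade this distributional identity to the stated regularity $P(\rho)\in W^{1,1}(\Omega)$ and to an a.e.\ equality. From the identity above, $\nabla P(\rho)=-(\tfrac1h\nabla\varphi+\nabla V)\rho$ is an $L^1$ vector field: indeed $\nabla V\rho\in L^1$ by \eqref{hyp V L2 bound} (or even \eqref{hyp V Linffty bound} locally) and Cauchy--Schwarz, while $\nabla\varphi\,\rho\in L^1$ because $\int_\Omega|\nabla\varphi|^2\rho=W_2^2(\rho,\mu)<\infty$ by Brenier's theorem, together with $\rho\in\Pa$. Since $P(\rho)\in L^1_{\loc}$ (from \eqref{hyp F}: $P(\rho)\le C(\rho+F(\rho))$ and $\F(\rho)<\infty$) and its distributional gradient is in $L^1$, we conclude $P(\rho)\in W^{1,1}(\Omega)$, and then \eqref{equality a.e 1} holds almost everywhere. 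The main obstacle I anticipate is the rigorous justification of the first variation of $\F$ along the flow perturbation — controlling $\F(\rho_s)$ and its difference quotient uniformly near $s=0$, which needs the superlinearity of $F$ and a truncation/monotone convergence argument to pass to the limit — but this is by now a routine part of the JKO machinery and can be cited from \cite{JKO,L,AGS}.
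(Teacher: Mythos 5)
Your proposal is correct and follows essentially the same route as the paper: a flow perturbation $\rho_s=(\Phi_s)_\#\rho_h^{k+1}$, first-order optimality giving the weak identity $\int_\Omega \nabla\varphi_h^{k+1}\cdot\xi\,\rho_h^{k+1}-h\int_\Omega P(\rho_h^{k+1})\dive\xi+h\int_\Omega\nabla V[\trho_h^{k+1}]\cdot\xi\,\rho_h^{k+1}=0$, and then the observation that the distributional gradient of $P(\rho_h^{k+1})$ is the $L^1$ vector field $-(\tfrac1h\nabla\varphi_h^{k+1}+\nabla V[\trho_h^{k+1}])\rho_h^{k+1}$ (integrable by Cauchy--Schwarz, \eqref{hyp V L2 bound}--\eqref{hyp V cv L2} and $\int|\nabla\varphi_h^{k+1}|^2\rho_h^{k+1}=W_2^2(\rho_h^{k+1},\trho_h^{k+1})$), which upgrades $P(\rho_h^{k+1})$ from $L^1$ to $W^{1,1}(\Omega)$ and turns the distributional identity into \eqref{equality a.e 1} a.e. The only cosmetic difference is that the paper first records the $BV$ bound via the $\|\xi\|_{L^\infty}$ estimate before identifying the gradient measure as absolutely continuous, whereas you pass directly to the $L^1$ identification; both steps rest on the same integrability facts.
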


\begin{proof}

The proof is the same as in \cite{A,L} for example. We start by taking the first variation in the semi-implicit JKO scheme along the flow of a smooth vector field.
Let $\xi \in \mathcal{C}^\infty_c(\Omega ; \Rn)$ be given and $\Phi_\tau$ the corresponding flow defined by
$$ \partial_\tau \Phi_\tau = \xi \circ \Phi_\tau, \, \Phi_0 =\id.$$
We define a pertubation of $\rho_{h}^{k+1}$ by $\rho_\tau:= {\Phi_\tau}_{\#}\rho_{h}^{k+1}$. 
Then we get
\begin{eqnarray}
\label{first variation}
\frac{1}{\tau}\left( \E_{h}(\rho_\tau |\trho_{h}^{k+1})-\E_{h}(\rho_{h}^{k+1} |\trho_{h}^{k+1}) \right) \geqslant 0.
\end{eqnarray}
By standard computations, we have
\begin{eqnarray}
\label{first variation dist2}
\limsup_{\tau \searrow 0} \frac{1}{\tau}(W_2^2(\rho_\tau,\trho_{h}^{k+1}) -W_2^2(\rho_{h}^{k+1},\trho_{h}^{k+1})) \leqslant \int_{\Omega \times \Omega} (x-y) \cdot \xi(x) \, d\gamma_{h}^{k+1}(x,y),
\end{eqnarray}
with $\gamma_{h}^{k+1}$ is the $W_2$-optimal transport plan in $\Pi (\rho_{h}^{k+1},\trho_{h}^{k+1})$ and $\gamma_{h}^{k+1}=(\id \times T_{h}^{k+1})_{\#}\rho_{h}^{k+1}$ with $T_{h}^{k+1} =\id -\nabla \varphi_{h}^{k+1}$. Moreover, using \eqref{hyp F}, \eqref{estimation fonctional} and Lebesgue's dominated convergence theorem, we obtain
\begin{eqnarray}
\label{first variation entropie}
\limsup_{\tau \searrow 0} \frac{1}{\tau}(\F(\rho_\tau) -\F(\rho_{h}^{k+1}) )\leqslant - \int_{\Omega } P(\rho_{h}^{k+1}(x)) \dive(\xi(x))\, dx.
\end{eqnarray}
Finally,
\begin{eqnarray}
\label{first variation potentiel}
\limsup_{\tau \searrow 0} \frac{1}{\tau}(\V(\rho_\tau | \trho_h^{k+1}) -\V(\rho_{i,h}^{k+1}| \trho_h^{k+1})) \leqslant \int_{\Omega} \nabla V[\trho_h^{k+1}] \cdot \xi \rho_h^{k+1} \, dx,
\end{eqnarray}

If we combine \eqref{first variation}, \eqref{first variation dist2}, \eqref{first variation entropie} and \eqref{first variation potentiel}, and if we replace $\xi$ by $-\xi$, we find that, for all $\xi \in \mathcal{C}_c^\infty(\Omega;\Rn)$,
\begin{equation}
\label{première variation borné}
\int_{\Omega} \nabla \varphi_{h}^{k+1} \cdot \xi \rho_{h}^{k+1} -h\int_{\Omega} P(\rho_{h}^{k+1}) \dive(\xi) +h\int_\Omega \nabla V[\trho_h^{k+1}] \cdot \xi \rho_{h}^{k+1}=0.
\end{equation}

Now we claim that $P(\rho_{h}^{k+1}) \in W^{1,1}(\Omega)$. Indeed, since $P$ is controlled by $F$ thanks to assumption \eqref{hyp F}, \eqref{estimation fonctional} gives $P(\rho_{h}^{k+1}) \in L^{1}(\Omega)$. Moreover, using \eqref{première variation borné}, we obtain

\begin{eqnarray*}
 \left| \int_\Omega P(\rho_{h}^{k+1}) \dive(\xi) \right| \leqslant \left[ \int_{\Omega} \frac{|\nabla \varphi_{h}^k(y)|}{h}\rho_{h}^{k+1} + \int_\Omega |\nabla V[\trho_h^{k+1}] | \rho_{h}^{k+1}  \right]  \|\xi \|_{L^{\infty}(\Omega)}. 
\end{eqnarray*}
But using \eqref{hyp V cv L2}, \eqref{hyp V L2 bound}, \eqref{estimation distance} and Cauchy-Schwarz inequality we get
\begin{eqnarray*}
\int_\Omega |\nabla V[\trho_h^{k+1}] | \rho_{h}^{k+1} & \leqslant &  \left[ \int_\Omega |\nabla V[\trho_h^{k+1}] -\nabla V[\rho_h^{k+1}]| \rho_{h}^{k+1} +\int_\Omega |\nabla V[\rho_h^{k+1}] | \rho_{h}^{k+1} \right]\\
 & \leqslant &  \left[ \Big(\int_\Omega |\nabla V[\trho_h^{k+1}] -\nabla V[\rho_h^{k+1}]|^2\rho_{h}^{k+1}\Big)^{1/2} +\Big(\int_\Omega |\nabla V[\rho_h^{k+1}] |^2 \rho_{h}^{k+1} \Big)^{1/2} \right] \\
& \leqslant & C\left[ W_2(\trho_h^{k+1},\rho_h^{k+1}) +1 \right]\\
& \leqslant & C.
\end{eqnarray*}
We thus have
\begin{eqnarray*}
 \left| \int_\Omega P(\rho_{h}^{k+1}) \dive(\xi) \right| \leqslant \left[ \frac{W_2(\trho_{h}^{k+1},\rho_{h}^{k+1})}{h} +C\right] \|\xi\|_{L^{\infty}(\Omega)}.
\end{eqnarray*}

This implies $P(\rho_{h}^{k+1}) \in BV(\Omega)$ and $\nabla P(\rho_{h}^{k+1})=\left(-\nabla V[\trho_h^{k+1}] \rho_{h}^{k+1}-\frac{\nabla \varphi_{h}^{k+1}}{h} \rho_{h}^{k+1}\right)$ in $\M^n(\Omega)$. In fact, $P(\rho_{h}^{k+1})$ is in $W^{1,1}(\Omega)$ because $ \nabla V[\trho_h^{k+1}]\rho_{h}^{k+1} + \frac{\nabla \varphi_{h}^k}{h} \rho_{h}^{k+1} \in L^1(\Omega)$ and  we have proved \eqref{equality a.e 1}.

\end{proof}

We immediately deduce an $L^1((0,T), BV(\Omega))$ estimate for $P(\rho_h)$:
\begin{cor}
\label{estimation BV}
For all $T>0$, we have
\begin{eqnarray}
\label{estimation gradient}
\|P(\rho_{h})\|_{L^1((0,T);W^{1,1}(\Omega))}\leqslant CT.
\end{eqnarray} 
\end{cor}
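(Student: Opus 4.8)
The plan is to obtain the bound by integrating the pointwise (in $k$) estimate already established in Proposition \ref{equality a.e} and in the argument preceding equation \eqref{equality a.e 1}, and then summing over the time steps contained in $(0,T)$. First I would recall that we have already shown, in the proof of Proposition \ref{equality a.e}, the bound
\[
\left| \int_\Omega P(\rho_h^{k+1}) \dive(\xi) \right| \leqslant \left[ \frac{W_2(\trho_h^{k+1},\rho_h^{k+1})}{h} + C \right] \|\xi\|_{L^\infty(\Omega)}
\]
for all $\xi \in \C^\infty_c(\Omega;\Rn)$, which together with $P(\rho_h^{k+1}) \in L^1(\Omega)$ (a consequence of \eqref{hyp F} and \eqref{estimation fonctional}) yields $P(\rho_h^{k+1}) \in W^{1,1}(\Omega)$ with
\[
\|\nabla P(\rho_h^{k+1})\|_{L^1(\Omega)} \leqslant \frac{W_2(\trho_h^{k+1},\rho_h^{k+1})}{h} + C.
\]
Combined with the $L^1$ bound on $P(\rho_h^{k+1})$ itself (again from \eqref{hyp F} and \eqref{estimation fonctional}), this gives $\|P(\rho_h^{k+1})\|_{W^{1,1}(\Omega)} \leqslant \frac{1}{h} W_2(\trho_h^{k+1},\rho_h^{k+1}) + C$.

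Next I would pass to the interpolation: by the definition \eqref{interpolation usual JKO} of $\rho_h$, on each interval $(hk, h(k+1)]$ we have $P(\rho_h(t,\cdot)) = P(\rho_h^{k+1})$, so
\[
\|P(\rho_h)\|_{L^1((0,T);W^{1,1}(\Omega))} = \int_0^T \|P(\rho_h(t,\cdot))\|_{W^{1,1}(\Omega)}\, dt \leqslant h \sum_{k=0}^{N-1} \|P(\rho_h^{k+1})\|_{W^{1,1}(\Omega)},
\]
with $N = \lceil T/h \rceil$ (up to the harmless fact that the last interval may stick out slightly beyond $T$, which only enlarges the right-hand side). Plugging in the per-step bound gives
\[
\|P(\rho_h)\|_{L^1((0,T);W^{1,1}(\Omega))} \leqslant h \sum_{k=0}^{N-1} \left( \frac{W_2(\trho_h^{k+1},\rho_h^{k+1})}{h} + C \right) = \sum_{k=0}^{N-1} W_2(\trho_h^{k+1},\rho_h^{k+1}) + C h N.
\]
The term $ChN \leqslant C(T+h) \leqslant CT$ for $h$ small. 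For the remaining sum, I would apply Cauchy–Schwarz in $k$ together with the key estimate \eqref{estimation distance}:
\[
\sum_{k=0}^{N-1} W_2(\trho_h^{k+1},\rho_h^{k+1}) \leqslant \sqrt{N} \left( \sum_{k=0}^{N-1} W_2^2(\trho_h^{k+1},\rho_h^{k+1}) \right)^{1/2} \leqslant \sqrt{N}\,\sqrt{Ch} \leqslant C\sqrt{T},
\]
since $N h \leqslant T + h \leqslant 2T$. Absorbing constants (and using $\sqrt{T} \leqslant \tfrac12(1+T) \leqslant C T$ for $T$ bounded below, or simply stating the bound as $C(1+T)$) yields \eqref{estimation gradient}.

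There is no real obstacle here; this is a bookkeeping corollary. The only points requiring a little care are: keeping track of the Cauchy–Schwarz step so that the square-root summability in \eqref{estimation distance} is used correctly (one cannot sum $W_2$, only $W_2^2$, controls are available), and the minor rounding issue with the last subinterval near $t = T$, which is absorbed into the constant. If one prefers to avoid Cauchy–Schwarz, an alternative is to bound $W_2(\trho_h^{k+1},\rho_h^{k+1})/h$ directly after noting from \eqref{estimation distance} and \eqref{estimation distance transport libre} that $W_2(\trho_h^{k+1},\rho_h^{k+1})$ is typically of order $h$, but the Cauchy–Schwarz route is cleanest and uses only the already-proven estimates.
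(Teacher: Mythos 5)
Your proposal is correct and follows essentially the same route as the paper: the per-step bound $h\int_\Omega|\nabla P(\rho_h^{k+1})|\leqslant W_2(\rho_h^{k+1},\trho_h^{k+1})+Ch$ (which the paper gets by integrating \eqref{equality a.e 1} and which you get equivalently from the duality estimate in the proof of Proposition \ref{equality a.e}), summation over $k$ with Cauchy--Schwarz and \eqref{estimation distance} for the gradient part, and \eqref{hyp F} together with \eqref{estimation fonctional} for the $L^1$ bound on $P(\rho_h)$ itself. You merely make explicit the Cauchy--Schwarz step and the rounding of the last time interval, which the paper leaves implicit.
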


\begin{proof}
If we integrate \eqref{equality a.e 1}, we obtain
$$ h\int_\Omega |\nabla P(\rho_{h}^{k+1})| \leqslant W_2(\rho_{h}^{k+1},\trho_{h}^{k+1}) +Ch, $$
Then we sum from $k=0$ to $N-1$ and thanks to \eqref{estimation distance}, we have
$$ \int_0^T \int_\Omega |\nabla P(\rho_{h})| \leqslant CT. $$
We conclude thanks to \eqref{hyp F} and \eqref{estimation fonctional}.
\end{proof}

\begin{prop}
\label{discreet equation}
Let $h>0$, $N\in \N^*$, $T:=Nh$,  $t_k:=hk$, for $k=0, \cdots, N$,  then, for every  $\phi \in \mathcal{C}^\infty_c ([0,T)\times \Rn)$
\begin{eqnarray*}
\int_0^T \int_{\Omega} \trho_{h}^2(t,x)  (\partial_t \phi(t,x) +W[\rho_h(t-h)] \cdot \nabla \phi(t,x) )\,dxdt&=&h\sum_{k=0}^{N-1}\int_{\Omega} \nabla P(\rho_{h}^{k+1}(x)) \cdot \nabla \phi (t_{k},x)\,dx\\
&+&h\sum_{k=0}^{N-1}\int_{\Omega} \nabla V[\trho_h^{k+1}] \cdot  \nabla \phi(t_{k},x)  \rho_h^{k+1} \, dx\\
&+&\sum_{k=0}^{N-1}\int_{\Omega \times \Omega} \mathcal{R}[\phi(t_{k},\cdot)](x,y) d\gamma_{h}^{k+1} (x,y)\\
&-& \int_{\Omega} \rho_{0}(x) \phi(0,x) \, dx,
\end{eqnarray*}
with, for all $\phi \in \mathcal{C}^\infty_c([0,T) \times \Rn)$,
$$ |\mathcal{R}[\phi](x,y)| \leqslant \frac{1}{2} \|D^2 \phi \|_{L^\infty ([0,T) \times \Omega)} |x- y|^2,$$
and $\gamma_{h}^{k+1}$ is the optimal transport plan in $\Pi (\rho_{h}^{k+1},\trho_{h}^{k+1})$.
\end{prop}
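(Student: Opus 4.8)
The plan is to test the continuous interpolation $\trho_h^2$ against $\phi$ slab by slab. On each interval $(t_k,t_{k+1}]$, up to the time shift $t\mapsto t-t_k$, $\trho_h^2$ is the solution of the linear continuity equation \eqref{scheme continuity equation} driven by $W[\rho_h^k]$ (the observation recorded just below \eqref{interpolation perturbation 2}); I would then reconnect the slabs by means of the Euler--Lagrange equation \eqref{equality a.e 1} of the semi-implicit JKO step.

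\textbf{Step 1: the continuity equation on one slab.} Fix $k\in\{0,\dots,N-1\}$. For $t\in(t_k,t_{k+1}]$ one has $\rho_h(t-h,\cdot)=\rho_h^k$ by \eqref{interpolation usual JKO}, so the vector field $W[\rho_h(t-h)]$ appearing in the statement coincides with $W[\rho_h^k]$ on this slab. Since $W[\rho_h^k]$ is divergence-free, has at most linear growth by \eqref{hyp W regularity + Linfty bound}, and is tangential to $\partial\Omega$, I would test \eqref{scheme continuity equation} against $\phi$ and integrate by parts in space (no boundary contribution on $\partial\Omega$ because $W[\rho_h^k]\cdot\nu=0$) and in time (keeping the endpoint contributions); using that the endpoint values of $\trho_h^2$ on the slab are $\rho_h^k$ at $t_k$ and $\trho_h^{k+1}$ at $t_{k+1}$, this gives
\[\int_{t_k}^{t_{k+1}}\!\!\int_\Omega \trho_h^2\,(\partial_t\phi+W[\rho_h^k]\cdot\nabla\phi)\,dx\,dt=\int_\Omega\phi(t_{k+1},\cdot)\,\trho_h^{k+1}\,dx-\int_\Omega\phi(t_k,\cdot)\,\rho_h^k\,dx.\]
All the integrals are finite since $\trho_h^2(t)$ has uniformly bounded second moment (Remark~\ref{estimation trho}), $|W[\rho_h^k](x)|\le C(1+|x|)$, and $\phi$ is compactly supported.

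\textbf{Step 2: summation, telescoping and the Euler--Lagrange equation.} Summing the above over $k=0,\dots,N-1$, I would split $\int_\Omega\phi(t_{k+1},\cdot)\,\trho_h^{k+1}=\int_\Omega\phi(t_{k+1},\cdot)\,\rho_h^{k+1}+\int_\Omega\phi(t_{k+1},\cdot)\,(\trho_h^{k+1}-\rho_h^{k+1})$ and use $\rho_h^0=\rho_0$ together with $\phi(T,\cdot)=0$ (compact support in $[0,T)$): the diagonal parts telescope and leave $-\int_\Omega\phi(0,x)\rho_0(x)\,dx$ plus the ``defect'' terms $\int_\Omega\phi(t_{k+1},\cdot)\,(\trho_h^{k+1}-\rho_h^{k+1})\,dx$ produced by the JKO steps. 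For each such term I would introduce the optimal plan $\gamma_h^{k+1}\in\Pi(\rho_h^{k+1},\trho_h^{k+1})$, rewrite it as $\int_{\Omega\times\Omega}(\phi(y)-\phi(x))\,d\gamma_h^{k+1}(x,y)$, and Taylor-expand $\phi$ at $x$: the first-order part is $\int_{\Omega\times\Omega}\nabla\phi(x)\cdot(y-x)\,d\gamma_h^{k+1}$ while the remainder is $\int_{\Omega\times\Omega}\mathcal{R}[\phi](x,y)\,d\gamma_h^{k+1}$ with $|\mathcal{R}[\phi](x,y)|\le\frac12\|D^2\phi\|_{L^\infty([0,T)\times\Omega)}|x-y|^2$ by Taylor's theorem --- this is precisely the remainder term and bound asserted. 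Using that $\gamma_h^{k+1}=(\id,T_h^{k+1})_\#\rho_h^{k+1}$ with $T_h^{k+1}=\id-\nabla\varphi_h^{k+1}$ (Brenier's theorem, as in the proof of Proposition~\ref{equality a.e}), the first-order part equals $-\int_\Omega\nabla\phi(x)\cdot\nabla\varphi_h^{k+1}(x)\,\rho_h^{k+1}(x)\,dx$, and substituting the Euler--Lagrange identity \eqref{equality a.e 1}, namely $-\nabla\varphi_h^{k+1}\rho_h^{k+1}=h\big(\nabla V[\trho_h^{k+1}]\rho_h^{k+1}+\nabla P(\rho_h^{k+1})\big)$ (where Proposition~\ref{equality a.e} guarantees $P(\rho_h^{k+1})\in W^{1,1}(\Omega)$, so that $\nabla P(\rho_h^{k+1})$ is a bona fide $L^1$ field), it becomes $h\int_\Omega\nabla\phi\cdot\nabla P(\rho_h^{k+1})\,dx+h\int_\Omega\nabla\phi\cdot\nabla V[\trho_h^{k+1}]\,\rho_h^{k+1}\,dx$. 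Summing over $k$ and relabeling the summation index assembles exactly the four terms on the right-hand side of the claimed identity.

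\textbf{Main obstacle.} There is no genuine difficulty here --- the proposition is essentially a careful transcription of the splitting scheme in weak form. The two points that have to be handled with some care are: (i) the rigorous justification that $\trho_h^2$ is, on each slab, a weak solution of the continuity equation with \emph{no} contribution on $\partial\Omega$, which is exactly where the DiPerna--Lions theory recalled in Section~\ref{sec-prel} and the tangency $W[\rho]\cdot\nu=0$ built into the Helmholtz decomposition enter; and (ii) keeping track of the index shifts through the telescoping so that each defect term is paired, via \eqref{equality a.e 1}, with the right Kantorovich potential. The Sobolev regularity $P(\rho_h^{k+1})\in W^{1,1}(\Omega)$ that gives meaning to $\nabla P(\rho_h^{k+1})$ as an $L^1$ vector field has already been established in Proposition~\ref{equality a.e}.
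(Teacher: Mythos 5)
Your proposal is correct and follows essentially the same route as the paper: slab-by-slab treatment of the transport phase, telescoping with $\phi(T,\cdot)=0$, Taylor expansion along the optimal plan $\gamma_h^{k+1}$, and substitution of the Euler--Lagrange identity \eqref{equality a.e 1} tested against $\nabla\phi$. The only (immaterial) difference is in Step 1, where you integrate the weak continuity equation by parts using $W[\rho_h^k]\cdot\nu=0$, while the paper works directly in Lagrangian form, writing $\trho_h^2(t,\cdot)=X_h^k(t-t_k,\cdot)_\#\rho_h^k$ and using $\frac{d}{dt}\phi(t,X_h^k(t-t_k,x))=(\partial_t\phi+W[\rho_h^k]\cdot\nabla\phi)(t,X_h^k(t-t_k,x))$, which gives the same slab identity without any boundary discussion.
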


\begin{proof}
Let $\varphi \in  \mathcal{C}^\infty_c ( \Rn)$,  multiplying \eqref{equality a.e 1} by $\nabla \varphi$ and integrating on $\Omega$, we obtain 
$$ -\int_\Omega  \nabla \varphi_{h}^{k+1} \cdot \nabla \varphi \rho_{h}^{k+1}  = h \left( \int_\Omega \nabla P(\rho_h^{k+1}) \cdot \nabla \varphi + \int_\Omega \nabla V[\trho_h^{k+1}] \cdot \nabla \varphi \rho_{h}^{k+1} \right).$$

But, we can rewrite the left hand side by
$$ -\int_\Omega  \nabla \varphi_{h}^{k+1} \cdot \nabla \varphi \rho_{h}^{k+1} = \int_{\Omega\times \Omega} (y-x) \cdot \nabla \varphi(x) \,d\gamma_{h}^{k+1}(x,y).$$
A second-order Taylor-Lagrange formula then gives
\begin{eqnarray}
\int_{\Omega \times \Omega} (y-x) \cdot \nabla \varphi(x) \,d\gamma_{h}^{k+1} & = & \int_{\Omega \times \Omega } \left( \varphi(y) - \varphi(x) \right) \,d\gamma_{h}^{k+1}(x,y) - \int_{\Omega \times \Omega } \mathcal{R}[\varphi](x,y) d\gamma_{h}^{k+1} (x,y) \nonumber \\
& =& \int_\Omega \varphi ( \trho_h^{k+1} -\rho_h^{k+1} )- \int_{\Omega \times \Omega } \mathcal{R}[\varphi](x,y) d\gamma_{h}^{k+1} (x,y). \label{Taylor-Lagrange}
\end{eqnarray}

Now let $\phi \in \mathcal{C}^\infty_c([0,T) \times \Rn)$, we have

\begin{eqnarray*}
&&\int_0^T \int_{\Omega} \trho_{h}^2(t,x)  (\partial_t \phi(t,x) +W[\rho_h(t-h)] \cdot \nabla \phi(t,x) )\,dxdt \\ &=&\sum_{k=0}^{N-1} \int_{t_k}^{t_{k+1}} \int_\Omega \rho_h^k(x) (\partial_t \phi +W[\rho_h^k] \cdot \nabla \phi )(t,X_h^k(t-t_k,x)) \,dxdt\\
\end{eqnarray*}
But, on $[t_k,t_{k+1}]$,
$$ \frac{d}{dt}[\phi(t,X_h^k(t-t_k,x))]=(\partial_t \phi +W[\rho_h^k] \cdot \nabla \phi )(t,X_h^k(t-t_k,x)).$$
Then,
\begin{eqnarray*}
\sum_{k=0}^{N-1} \int_{t_k}^{t_{k+1}} \int_\Omega \rho_h^k(x) (\partial_t \phi +W[\rho_h^k] \cdot \nabla \phi )(t,X_h^k(t-t_k,x)) \,dxdt& = & \sum_{k=0}^{N-1} \int_{t_k}^{t_{k+1}} \int_\Omega \rho_h^k(x) \frac{d}{dt}[\phi(t,X_h^k(t-t_k,x))] \,dxdt\\
&=& \sum_{k=0}^{N-1}  \int_\Omega \rho_h^k(x) \left[ \phi(t_{k+1},X_h^k(h,x)) - \phi(t_k,x)\right] \,dx\\
&=& \sum_{k=0}^{N-1}  \int_\Omega  \left[ \phi(t_{k+1}, x) \trho_h^{k+1}(x) - \phi(t_k,x)\rho_h^k(x) \right] \, dx\\
&=& \sum_{k=0}^{N-1}  \int_\Omega  \phi(t_{k+1}, x) \left( \trho_h^{k+1}(x) - \rho_h^{k+1}(x) \right)\\
&& - \int_\Omega \phi(0,x) \rho_0(x) \, dx.
\end{eqnarray*}

Then the proof is complete by applying \eqref{Taylor-Lagrange} with $\varphi = \phi(t_{k+1}, \cdot)$.

\end{proof}

\section{Convergence and proof of Theorem \ref{eqution P2}}\label{sec-proof}

\subsection{Weak and strong convergences}
Using a refined version of Ascoli theorem (see \cite{AGS}), estimate \eqref{estimation distance} and remark \ref{estimation trho} and taking subsequences, if necessary, we have that, for every $T<+\infty$, $\rho_h$, $\trho_h^1$ and $\trho_h^2$ converge in $L^{\infty}((0,T), W_2)$ to some respective limits   $\rho$, $\trho^1$ and $\trho^2$:
\[\sup_{t\in[0,T]} \max (W_2(\rho_h(t,.), \rho(t,.)),   W_2(\trho_h^1(t,.), \trho^1(t,.)), W_2(\trho_h^2(t,.), \trho^2(t,.)))\to 0 \mbox{ as } h\to 0.\] 
 In fact, these three sequences have to converge to the same limit, $\rho$. Indeed, for all $\varphi \in \mathcal{C}^\infty_c((0,T) \times \Omega)$,
\begin{eqnarray*}
\int_0^T \int_\Omega \varphi (\rho_h -\trho_h^1) & = & \sum_{k=0}^{N-1} \int_{t_k}^{t_{k+1}} \int_\Omega \varphi (\rho_h^{k+1} - \trho_h^{k+1})\\
& \leqslant & Ch \sum_{k=0}^{N-1}W_2(\rho_h^{k+1} ,\trho_h^{k+1})\\
& \leqslant & ChN^{1/2}\left( \sum_{k=0}^{N-1}W_2^2(\rho_h^{k+1} ,\trho_h^{k+1}) \right)^{1/2}\\
&\leqslant & CT^{1/2}h,
\end{eqnarray*}
because of \eqref{estimation distance}. With a similar computation, we  find that $\rho_h$ and $\trho_h^2$ converge to the same limit. We thus have
\begin{equation}\label{cvuwass}
\sup_{t\in[0,T]} \max (W_2(\rho_h(t,.), \rho(t,.)),   W_2(\trho_h^1(t,.), \rho(t,.)), W_2(\trho_h^2(t,.), \rho(t,.)))\to 0 \mbox{ as } h\to 0.
\end{equation}

Moreover it is classical to deduce from  \eqref{estimation distance} and remark \ref{estimation trho} an H\"older-like  estimate of the form \\$W_2(\rho_h(t,.), \rho_h(s,.)) \le C \sqrt{ \vert t-s\vert +h}$ from which one deduces that the limit curve $\rho$ actually belongs to $ \mathcal{C}^{1/2}((0,T), W_2)$. This kind of convergence will be enough to pass to the limit in $\nabla V[\trho_h^1] \rho_h$ and $W[\rho_h]\trho_h^2$, because of assumptions \eqref{hyp V cv L2} and \eqref{hyp W cv L2}, but we will need a stronger convergence to deal with the nonlinear diffusion term $P(\rho_h)$. Fo this purpose, we will use an extension of the Aubin-Lions Lemma due to Rossi and Savar\'e in \cite{RS}:

\begin{thm}[th. 2 in \cite{RS}]
\label{Rossi-Savaré}
On a Banach space $B$, let be given
\begin{itemize}
\item a normal coercive integrand $\G \, : \, B \rightarrow \R^+$, i.e, $\G$ is l.s.c and its sublevels are relatively compact in $B$,
\item a pseudo-distance $g \, : \, B\times B \rightarrow [0,+\infty]$, i.e, $g$ is l.s.c and $ \left[ g(\rho,\mu)=0, \, \rho,\mu \in B \text{ with } \G(\rho),\G(\mu) < \infty \right] \Rightarrow \rho=\mu$.
\end{itemize}

Let $T>0$ and $U$ be a set of measurable functions $u \, : \, (0,T) \rightarrow B$. Under the hypotheses that 
\begin{align}
\label{condition U}
\sup_{u\in U} \int_0^T \G(u(t)) \,dt <+\infty \qquad \text{  and  } \qquad \lim_{h\searrow 0} \sup_{u \in U} \int_0^{T-h} g(u(t+h),u(t)) \, dt =0,
\end{align} 
$U$ contains a subsequence $(u_n)_{n \in \N}$ which converges (strongly in $B$) in measure with respect to $t\in (0,T)$ to a limit $u_\star \, : \, (0,T) \rightarrow B$.
\end{thm}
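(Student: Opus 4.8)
The plan is to establish this generalized Aubin--Lions--Dubinskii compactness result in three moves: \emph{tightness in space}, \emph{equicontinuity in time measured in the norm of $B$}, and a \emph{Fréchet--Kolmogorov type extraction} in the space of $B$-valued measurable functions equipped with the topology of convergence in measure.

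First I would extract spatial compactness from the coercivity of $\G$. Writing $S:=\sup_{u\in U}\int_0^T \G(u(t))\,dt<+\infty$ and setting, for $R>0$, $K_R:=\{v\in B:\G(v)\le R\}$, lower semicontinuity of $\G$ makes $K_R$ closed, and by hypothesis it is relatively compact, hence compact. Markov's inequality gives $|\{t\in(0,T):u(t)\notin K_R\}|\le S/R$ for every $u\in U$, so up to discarding a set of times of measure $\le S/R$ --- uniformly in $u$ --- every trajectory takes values in the single compact set $K_R$.

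Second I would convert the integral control of $g$ into a genuine modulus of continuity for $\|\cdot\|_B$. The key lemma, proved by contradiction and compactness, is: for every $R>0$ and $\eta>0$ there is $\sigma=\sigma(R,\eta)>0$ such that $v,w\in K_R$ with $\|v-w\|_B\ge\eta$ forces $g(v,w)\ge\sigma$; indeed a violating sequence would, after passing to convergent subsequences in the compact $K_R$, produce $v\neq w$ with $g(v,w)=0$ and $\G(v),\G(w)<\infty$, contradicting that $g$ separates points of finite energy (using here the lower semicontinuity of both $g$ and $\G$). Combining this lemma with the first step and the second hypothesis, for given $\eta,\delta>0$ one picks $R$ with $2S/R<\delta/2$ and then $h_0>0$ with $\sup_{u\in U}\int_0^{T-h} g(u(t+h),u(t))\,dt<\sigma(R,\eta)\,\delta/2$ for $h<h_0$; splitting the set $\{t:\|u(t+h)-u(t)\|_B\ge\eta\}$ according to whether $u(t),u(t+h)$ lie in $K_R$ yields
\[
\sup_{u\in U}\bigl|\{t\in(0,T-h):\|u(t+h)-u(t)\|_B\ge\eta\}\bigr|\le\delta\qquad(0<h<h_0).
\]

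Third, with tightness and this equicontinuity-in-measure in hand, I would run a Fréchet--Kolmogorov/Arzel\`a--Ascoli argument in $L^0((0,T);B)$: mollify each $u$ in time, $u^h(t):=\frac1h\int_t^{t+h}u(s)\,ds$ (a Bochner integral, legitimate because the integral bound on the coercive $\G$ yields a weak uniform-integrability bound on $\|u(\cdot)\|_B$), note that $u^h$ takes values in the convex hull of values of $u$, hence --- after removing a small-measure set of times --- in a fixed compact convex set, that $t\mapsto u^h(t)$ is equicontinuous for fixed $h$, and that the previous estimate controls $\|u^h(t)-u(t)\|_B$ in measure uniformly in $u$ as $h\to0$; a diagonal extraction over a countable dense set of times combined with Arzel\`a--Ascoli then produces a subsequence of the $u^h$'s converging in $C([0,T];B)$, and letting $h\to0$ along it gives a subsequence of $U$ that is Cauchy, hence convergent, in measure, with lower semicontinuity of $g$ and Fatou confirming that the limit $u_\star$ is an admissible measurable $B$-valued function. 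An alternative, perhaps cleaner route for this last step is to pass to Young measures: associate to each $u_n$ its Young measure on $(0,T)\times B$, extract a narrowly convergent subsequence using the tightness of Step~1, and use the equicontinuity estimate together with the separation lemma of Step~2 to show that the limit Young measure is concentrated on the graph of a map $u_\star$, which forces convergence in measure. The main obstacle is precisely this third step: the abstract compactness criterion for $B$-valued measurable functions under convergence in measure is not a black box --- one must check Bochner measurability so that time-mollification is well defined, and organize the diagonal/Cauchy argument along a sequence $\eta_j,\delta_j\to0$ with matching $R_j\to\infty$, $h_j\to0$, since the compact set and the excised time-set both depend on the chosen accuracy. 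Steps one and two are short; the bookkeeping in step three is where the real work lies.
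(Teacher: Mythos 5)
A preliminary remark: the paper offers no proof of this statement — it is quoted as a black box from Rossi–Savar\'e \cite{RS} — so your argument can only be judged on its own merits. Your Steps 1 and 2 are correct and capture the real mechanism: Markov's inequality together with compactness of the sublevels $K_R=\{\G\leqslant R\}$ (closed by lower semicontinuity), and the separation lemma ``$v,w\in K_R$, $\|v-w\|_B\geqslant \eta$ $\Rightarrow$ $g(v,w)\geqslant \sigma(R,\eta)$'', proved by compactness and the two lower semicontinuities, which converts the integral equicontinuity in $g$ into equicontinuity in measure for the norm of $B$. This comparison of $g$ with the norm on compact sublevels is indeed the heart of the matter.

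The genuine gap is in Step 3, and it is concrete: the claim that $\sup_{u\in U}\int_0^T\G(u(t))\,dt<+\infty$ yields a (weak uniform-)integrability bound on $\|u(\cdot)\|_B$ is false. Coercivity of $\G$ only says that each sublevel is bounded; it gives no quantitative lower bound of $\G$ in terms of the norm, so $\G$ may grow as slowly as an iterated logarithm of $\|v\|_B$ and $\|u(\cdot)\|_B$ need not be locally integrable. Consequently the averages $u^h(t)=\frac1h\int_t^{t+h}u(s)\,ds$ need not be defined as Bochner integrals, and, contrary to what you write, excising a small set of times $t$ does not place $u^h(t)$ in a compact convex set: the average over $(t,t+h)$ still sees the values of $u$ on a small bad $s$-set, where they are unbounded. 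The repair is standard but must come \emph{before} mollifying: fix $R$, replace $u$ by the truncation $u_R$ equal to $u$ on the measurable set $\{t:\G(u(t))\leqslant R\}$ and to a fixed point of $K_R$ on its complement (of measure at most $S/R$, uniformly in $u$). Then $u_R$ is bounded and essentially separably valued, $u_R^h$ is well defined and Lipschitz in $t$ for fixed $h$ uniformly in $u$, with values in the closed convex hull of $K_R$ (compact by Mazur's theorem), and your Step 2 estimate, applied to $u_R$ at the price of another $O(S/R)$ in measure, controls $\|u_R^h-u_R\|_B$ in measure uniformly as $h\to0$; the Arzel\`a--Ascoli plus diagonal extraction along $R_j\to\infty$, $h_j\to 0$, $\eta_j,\delta_j\to 0$ then produces a subsequence that is Cauchy, hence convergent, in measure, as you intend. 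With that correction — or with your Young-measure variant, which sidesteps Bochner integration altogether — the overall plan is sound.
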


We now apply this theorem to $B=L^1(\Omega)$, $U=\{ \rho_h\}_h$, $g$ defined by
$$ g(\rho,\mu) :=\left\{ \begin{array}{ll}
W_2(\rho,\mu) & \text{if } \rho,\mu \in \mathcal{P}_2(\Omega),\\
+\infty & \text{otherwise},
\end{array}\right.$$
and $\G$ by
$$ \G(\rho):= \left\{ \begin{array}{ll}
\F(\rho)+ \|P(\rho) \|_{BV(\Omega)} + M(\rho)  & \text{if } \rho \in \mathcal{P}_{2}^{ac}(\Omega), P(\rho) \in BV(\Omega) \text{ and } F(\rho) \in L^1(\Omega),\\
+\infty & \text{otherwise}.
\end{array}\right.$$

\begin{lem}
\label{sublevels of G}
$\G$ is l.s.c on $L^1(\Omega)$ and its sublevels are relatively compact in $L^1(\Omega)$.
\end{lem}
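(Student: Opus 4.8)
The plan is to establish the two assertions of Lemma \ref{sublevels of G} separately. For the lower semicontinuity of $\G$, I would take a sequence $\rho_n \to \rho$ in $L^1(\Omega)$ with $\sup_n \G(\rho_n) < +\infty$; it suffices to treat this case since otherwise $\liminf \G(\rho_n)$ may be $+\infty$ and there is nothing to prove. Extracting a subsequence along which $\G(\rho_n)$ converges to its $\liminf$, and (up to a further subsequence) assuming $\rho_n \to \rho$ a.e., I would handle the three pieces of $\G$ one at a time: $\F(\rho) = \int_\Omega F(\rho)\,dx \le \liminf \F(\rho_n)$ follows from Fatou's lemma together with the fact that $F$ is bounded below in a way controlled by the second moment (assumption \eqref{hyp F lower bound }) — more precisely one first notes the bound on $M(\rho_n)$ forces tightness and, using that $F$ is superlinear and convex, one gets the lower bound on $\int F(\rho)$ by the standard convexity/de la Vall\'ee–Poussin argument (or simply by writing $F = F_+ - F_-$ with $F_-$ having at most linear growth and $F_+ \ge 0$, then Fatou on $F_+$ and uniform integrability on $F_-$). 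The term $M(\rho) \le \liminf M(\rho_n)$ is again Fatou (the integrand $|x|^2$ is nonnegative), and for $\|P(\rho)\|_{BV(\Omega)}$ I would argue that $P(\rho_n) \to P(\rho)$ in $L^1_{\loc}$ (since $P$ is continuous and $P(\rho_n)$ is bounded in $L^1$ via \eqref{hyp F}, with the a.e. convergence $\rho_n \to \rho$ giving $P(\rho_n)\to P(\rho)$ a.e., then Vitali/uniform integrability upgrades this to $L^1(\Omega)$), and then use the lower semicontinuity of the total variation under $L^1_{\loc}$ convergence.

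For the relative compactness of the sublevels, fix $c > 0$ and consider the set $\{\rho : \G(\rho) \le c\}$. On such a set one has a uniform bound on $\|P(\rho)\|_{BV(\Omega)}$, a uniform bound on $M(\rho)$, and a uniform bound on $\F(\rho) = \int F(\rho)$. I would first argue compactness in $L^1_{\loc}(\Omega)$: the BV bound on $P(\rho)$ gives, by the compact embedding $BV \hookrightarrow L^1_{\loc}$, that $\{P(\rho)\}$ is relatively compact in $L^1_{\loc}(\Omega)$; since $P$ is strictly increasing (because $P'(\rho) = \rho F''(\rho) > 0$ by strict convexity of $F$) and continuous, its inverse is continuous, so from $L^1_{\loc}$ compactness of $\{P(\rho)\}$ one recovers $L^1_{\loc}$ compactness of $\{\rho\}$ itself — here one should be a little careful and instead extract an a.e.-convergent subsequence of $P(\rho_n)$, deduce an a.e.-convergent subsequence of $\rho_n$, and then promote to $L^1_{\loc}$ by a domination/uniform-integrability argument using the $\F$ bound. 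The passage from $L^1_{\loc}(\Omega)$ to $L^1(\Omega)$ is where tightness enters: the uniform second moment bound $M(\rho) \le c$ prevents mass escaping to infinity (on $\{|x| > R\}$ one has $\int_{|x|>R}\rho \le c/R^2$), and the uniform superlinear bound $\int F(\rho) \le c$ together with the de la Vall\'ee–Poussin criterion gives equi-integrability of the family $\{\rho\}$, which rules out concentration; combining tightness and equi-integrability with the $L^1_{\loc}$ convergence yields convergence in $L^1(\Omega)$ (e.g. via Vitali's convergence theorem applied on all of $\Omega$).

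The main obstacle I anticipate is the tightness/equi-integrability step needed to upgrade $L^1_{\loc}(\Omega)$ convergence to $L^1(\Omega)$ convergence when $\Omega$ is unbounded — this is precisely why the second moment $M(\rho)$ and the internal energy $\F(\rho)$ were built into the definition of $\G$ rather than just $\|P(\rho)\|_{BV}$. One must be careful that the de la Vall\'ee–Poussin argument applies: superlinearity of $F$ ($F(\rho)/\rho \to +\infty$) is exactly the hypothesis that makes $\{\rho : \int F(\rho)\,dx \le c\}$ uniformly integrable. A secondary technical point is ensuring the inversion of $P$ is legitimate at the level of the degenerate case (e.g. $P$ vanishing on an interval, as could happen for general $F$), but under the stated strict convexity of $F$ on $(0,+\infty)$ this does not occur, so passing from convergence of $P(\rho_n)$ to convergence of $\rho_n$ is safe. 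Everything else is a routine assembly of Fatou's lemma, continuity of $P$, lower semicontinuity of total variation, and the compact embedding of $BV$ into $L^1_{\loc}$.
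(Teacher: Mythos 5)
Your proposal is correct and follows essentially the same route as the paper's proof: relative compactness of sublevels via the $BV(\Omega)$ bound on $P(\rho)$, the compact embedding into $L^1_{\loc}(\Omega)$, inversion of the continuous, strictly increasing $P$ to get a.e. convergence of $\rho$, Vitali's theorem using the superlinearity of $F$, and the second-moment bound to pass from $L^1_{\loc}$ to $L^1(\Omega)$; and lower semicontinuity via a.e. convergence, $L^1_{\loc}$ convergence of $P(\rho_n)$, and lower semicontinuity of $\F$, $M$ and the $BV$-norm. The minor cosmetic deviations (the unnecessary upgrade of $P(\rho_n)$ to global $L^1(\Omega)$ convergence, and the loose claim that the negative part of $F$ has linear growth, which your alternative moment/de la Vall\'ee--Poussin remark already covers) do not affect the argument.
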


\begin{proof}

Let us start by proving that sublevels of $\G$ are relatively compact in $L^1(\Omega)$. Let 
\[A_c:= \left\{ \rho \in L^1(\Omega) \, : \, \G(\rho) \leqslant c \right\}\]
and  $(\rho_k)$ be a sequence in $A_c$ then $P(\rho_k)$ is bounded in $BV(\Omega)$ thus, up to a subsequence $P(\rho_k)$ converges to some $\Phi$ in $L^1_{\loc}(\Omega)$ and a.e.. Since $P$ is continuous, one to one and its inverse is continuous, $\rho_k$ converges to $\rho:=P^{-1}(\Phi)$  a.e.; and, since $\G(\rho_k) \leqslant c$ and $F$ is superlinear, $\rho_k$ is uniformly integrable, using Vitali's convergence theorem, we obtain that $\rho_k$ converges to $\rho$ in $L^1(K\cap \Omega)$ for every compact $K$. To conclude that there is convergence in $L^1(\Omega)$, we use the fact that the second momentum of $\rho_k$ is uniformly bounded:
\begin{eqnarray*}
\int_\Omega |\rho_k - \rho | & \leqslant & \int_{\Omega \setminus B_R} \frac{|x|^2}{R^2}|\rho_k - \rho | + \int_{B_R \cap \Omega} |\rho_k - \rho | \\
& \leqslant & \frac{2c}{R^2} + \int_{B_R\cap \Omega} |\rho_k - \rho |. 
\end{eqnarray*}
The first term in the right hand can be made arbitrary small by choosing $R$ large enough and the second term converges to zero by $L^1(B_R \cap \Omega)$-convergence.

Now we have to show the lower semi-continuity of $\G$ on $L^1(\Omega)$. Let $(\rho_k)$ be a sequence which converges strongly to $\rho$ in $L^1(\Omega)$ with (without loss of generality)  $\sup_{k} \G(\rho_k) \leqslant C<+\infty$. Without loss of generality, we can assume that $\rho_k$ converges to $\rho$ a.e. Since $\sup_{k} \G(\rho_k) \leqslant C$, $P(\rho_k)$ is uniformly bounded in $BV(\Omega)$ so $P(\rho_k)$ converges weakly to $\mu$ in $BV(\Omega)$. Moreover, $P(\rho_k)$ converges strongly to $\mu$ in $L^1_{\loc}(\Omega)$. We can conclude that $\mu=P(\rho)$ and by lower semi-continuity of $\F$, $M$ and the $BV$-norm we have
$$ \G(\rho) \leqslant \liminf_{k\nearrow +\infty} \G(\rho_k).$$

\end{proof}

Thanks to lemma \ref{sublevels of G}, to apply theorem \ref{Rossi-Savaré}, it remains to verify \eqref{condition U}. The first condition of \eqref{condition U} is satisfied because of the estimate on the momentum, \eqref{estimation moment}, on the internal energy $\F$, \eqref{estimation fonctional} and on the gradient of $P(\rho_h)$ \eqref{estimation gradient}. The second condition of  \eqref{condition U} comes from the estimate on the distance \eqref{estimation distance} and remark \ref{estimation trho} (see for example \cite{DFM} for a detailed proof).
Then theorem \ref{Rossi-Savaré} implies that $\rho_h$ converges in measure with respect to $t$ in $L^1(\Omega)$ to $\rho$. Since convergence in measure implies a.e convergence, up to a subsequence, we may also assume
that $\rho_{h}(t,.)$ converges strongly in $L^1(\Omega)$ to $\rho(t,.)$ for a.e. $t$. Then Lebesgue's dominated convergence theorem implies that $\rho_{h}$ converges strongly in $L^1((0,T) \times \Omega)$ to $\rho$. 

\smallskip

Thanks to  \eqref{hyp F} and \eqref{estimation fonctional} $P(\rho_h)$ is  uniformly bounded in $L^\infty((0,T),L^1(\Omega))$. In addition, by corollary \ref{estimation BV}, $P(\rho_h)$ is uniformly bounded in $L^1((0,T),W^{1,1}(\Omega))$.  Thanks to the  Sobolev embedding, we deduce that $P(\rho_h)$ is uniformly bounded in $L^\infty((0,T),L^1(\Omega)) \cap L^1((0,T),L^{n/n-1}(\Omega))$. To have uniform integrability of $P(\rho_h)$ both in the time and space variables, the following will be  useful:


\begin{lem}
\label{interpolation}
Let $p>1$, $q:=\frac{2p-1}{p}$ and $f \in L^\infty((0,T),L^1(\Omega)) \cap L^1((0,T),L^{p}(\Omega))$ then $f \in L^q((0,T) \times \Omega)$  and we have
$$ \| f \|_{L^q((0,T) \times \Omega)}^q \leqslant  \|f \|^{q-1}_{L^\infty_t(L^1_x)} \|f\|_{L^1_t(L^p_x)}.$$
\end{lem}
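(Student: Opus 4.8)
The plan is to prove the interpolation inequality by a direct application of Hölder's inequality in the space variable, integrated over time. The exponent $q=\frac{2p-1}{p}=2-\frac1p$ lies strictly between $1$ and $2$, and the decomposition $|f|^q=|f|^{q-1}\cdot|f|$ suggests writing the spatial integral $\int_\Omega|f(t,x)|^q\,dx$ as a product of an $L^\infty$-type factor built from $\|f(t,\cdot)\|_{L^1}$ and a factor built from $\|f(t,\cdot)\|_{L^p}$. Concretely, first I would fix $t$ and apply Hölder with conjugate exponents chosen so that one factor is $|f|^{q-1}$ and the other is $|f|$: we need exponents $a,b$ with $\frac1a+\frac1b=1$, $(q-1)a=1$ (so that $|f|^{q-1}\in L^a$ is controlled by $\|f\|_{L^1}^{q-1}$... — actually one must be a bit more careful and instead split $|f|^q = |f|^{\theta}\,|f|^{q-\theta}$ with the two pieces landing in $L^1$ and $L^p$ respectively).

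More precisely, the clean route is: write $q = \theta\cdot 1 + (1-\theta)\cdot p$ at the level of a convex-combination-of-exponents identity, i.e. choose $\theta\in(0,1)$ with $\frac1q$... no — the direct elementary computation is to note $|f|^q = |f|^{q-1}|f|$ and apply Hölder in $x$ with exponents $p' = \frac{p}{p-1}$ for the first factor (raised to a suitable power) and $p$ for the second. Precisely, $\int_\Omega |f|^{q-1}\,|f|\,dx \le \big(\int_\Omega |f|^{(q-1)p'}\,dx\big)^{1/p'}\big(\int_\Omega|f|^p\,dx\big)^{1/p}$; the choice $q = 2-\frac1p$ is exactly what makes $(q-1)p' = (1-\frac1p)\cdot\frac{p}{p-1} = 1$, so the first factor equals $\|f(t,\cdot)\|_{L^1(\Omega)}^{(q-1)}$ — wait, $(\int|f|^1)^{1/p'} = \|f(t,\cdot)\|_{L^1}^{1/p'}$ and $1/p' = 1-1/p = q-1$. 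Hence $\int_\Omega|f(t,x)|^q\,dx \le \|f(t,\cdot)\|_{L^1(\Omega)}^{q-1}\,\|f(t,\cdot)\|_{L^p(\Omega)}$. Then I bound $\|f(t,\cdot)\|_{L^1(\Omega)}^{q-1}\le \|f\|_{L^\infty_t(L^1_x)}^{q-1}$ uniformly in $t$, pull it out of the time integral, and integrate the remaining factor $\|f(t,\cdot)\|_{L^p(\Omega)}$ over $(0,T)$ to get $\|f\|_{L^1_t(L^p_x)}$. This yields exactly $\|f\|_{L^q((0,T)\times\Omega)}^q \le \|f\|_{L^\infty_t(L^1_x)}^{q-1}\|f\|_{L^1_t(L^p_x)}$.

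There is essentially no serious obstacle here; it is a routine Hölder interpolation, and the only thing to get right is the bookkeeping of exponents — checking that $q>1$ (so $L^q$ is a genuine Banach norm and $q-1>0$), that $(q-1)p'=1$, and that the measurability/finiteness of all quantities is guaranteed by the hypothesis $f\in L^\infty_t(L^1_x)\cap L^1_t(L^p_x)$ (in particular $t\mapsto\|f(t,\cdot)\|_{L^p}$ is measurable and integrable, so Tonelli/Fubini applies to justify $\int_0^T\int_\Omega|f|^q = \int_0^T\big(\int_\Omega|f|^q\big)dt$). The only mild care needed is the degenerate case where $\|f(t,\cdot)\|_{L^p}=+\infty$ on a null set of times, which does not affect the integral. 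I would present the proof in three short lines: the pointwise-in-$t$ Hölder estimate, the uniform bound on the $L^1_x$ factor, and the time integration.
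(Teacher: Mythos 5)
Your proof is correct and follows essentially the same route as the paper: both reduce to the slice-wise estimate $\|f(t,\cdot)\|_{L^q(\Omega)}^q \leqslant \|f(t,\cdot)\|_{L^1(\Omega)}^{q-1}\,\|f(t,\cdot)\|_{L^p(\Omega)}$ (the paper quotes the standard $L^1$--$L^p$ interpolation inequality with $\theta=\frac{p-1}{2p-1}$, while you re-derive it directly by H\"older with exponents $p$ and $p'$, which is the same computation), and then conclude by bounding the $L^1_x$ factor uniformly in $t$ and integrating in time.
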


\begin{proof}
Writing
\[\frac{1}{q}=\frac{p}{2p-1}= \theta + \frac{1-\theta}{p}, \; \mbox{ with } \theta=\frac{p-1}{2p-1}\in (0,1)\]
and observing that $(1-\theta)q=1$ and $\theta q=q-1$, the interpolation inequality yields that 
$$ \|f \|_{L^q_x}^q  \leqslant \|f\|_{L^1_x}^{ q-1} \|f\|_{L^p_x},$$
since $f \in L^\infty((0,T),L^1(\Omega)) \cap L^1((0,T),L^{p}(\Omega))$ this implies that $f \in L^q((0,T) \times \Omega)$ and
\begin{eqnarray*}
 \| f \|_{L^q((0,T) \times \Omega)}^q= \int_0^T \|f \|_{L^q_x}^q=  & \leqslant &  \int_0^T\|f\|_{L^1_x}^{q-1} \|f\|_{L^p_x}\\
&\leqslant & \|f\|_{L^\infty_t(L^1_x)}^{q-1} \|f\|_{L^1_t(L^p_x)}.
\end{eqnarray*}
\end{proof}


Applying lemma \ref{interpolation} we deduce that $P(\rho_h)$ is uniformly bounded in $L^{(n+1)/n}((0,T)\times \Omega)$. This implies that $P(\rho_h)$ is uniformly integrable and since we know that it converges a.e. to $P(\rho)$, we can deduce from  Vitali's convergence theorem  that $P(\rho_h)$ converges strongly to $P(\rho)$ in $L^1_{\loc}( (0,T) \times \Omega)$. 

Thanks to Corollary \ref{estimation BV} we deduce that  $\nabla P(\rho_h)$ converges vaguely to $\nabla P(\rho)$ in $\mathcal{M}^n_{\loc}((0,T) \times \Omega)$. In fact, we have $\nabla P(\rho)$ in $\mathcal{M}^n((0,T) \times \Omega)$ and narrow convergence of $\nabla P(\rho_h)$ to $\nabla P(\rho)$, thanks to Prokhorov Theorem and the following tightness estimate:

\begin{lem}\label{tightgradp}
The family  $\nabla P(\rho_h)$, viewed as  vector-valued measures  on $[0,T]\times \Omega$, is tight, more precisely, for every $h$ and every $A$ measurable, $A\subset \Omega$
\begin{equation}\label{tightquant}
\int_0^T \int_A \vert \nabla P(\rho_h) \vert \le C\Big(1+\sqrt{h}\Big) \Big(\int_0^T \int_A \rho_h(t,x) \mbox{d} x \mbox{d}t\Big)^{1/2}.
\end{equation}
\end{lem}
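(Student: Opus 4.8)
The target is the tightness estimate \eqref{tightquant} for the vector measures $\nabla P(\rho_h)$. The natural starting point is the pointwise (a.e.) identity \eqref{equality a.e 1} from Proposition \ref{equality a.e}, which on the $k$-th time slab reads
\[
h\,\nabla P(\rho_h^{k+1}) = -\nabla\varphi_h^{k+1}\,\rho_h^{k+1} - h\,\nabla V[\trho_h^{k+1}]\,\rho_h^{k+1}\quad\text{a.e.}
\]
The plan is to fix a measurable set $A\subset\Omega$, integrate $|\nabla P(\rho_h^{k+1})|$ over $A$ using this identity, sum over the time slabs $k=0,\dots,N-1$, and estimate the two contributions on the right separately, then recognise the piecewise-constant interpolation $\rho_h$ on the left.

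\textbf{The drift term.} For the term $h\int_A|\nabla V[\trho_h^{k+1}]|\,\rho_h^{k+1}$, I would apply Cauchy--Schwarz on the set $A$ with respect to the measure $\rho_h^{k+1}\,dx$:
\[
h\int_A |\nabla V[\trho_h^{k+1}]|\,\rho_h^{k+1}\,dx \le h\Big(\int_A \rho_h^{k+1}\,dx\Big)^{1/2}\Big(\int_\Omega |\nabla V[\trho_h^{k+1}]|^2\,\rho_h^{k+1}\,dx\Big)^{1/2}.
\]
The second factor is bounded by a constant: splitting $\nabla V[\trho_h^{k+1}] = (\nabla V[\trho_h^{k+1}]-\nabla V[\rho_h^{k+1}]) + \nabla V[\rho_h^{k+1}]$, assumption \eqref{hyp V cv L2} controls the first piece by $C\,W_2^2(\trho_h^{k+1},\rho_h^{k+1})$ and \eqref{hyp V L2 bound} the second by $C$; since $W_2(\trho_h^{k+1},\rho_h^{k+1})\le 1$ for $h$ small (by \eqref{estimation distance}), this factor is $\le C$. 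Summing over $k$ gives $h\sum_k\int_A|\nabla V[\trho_h^{k+1}]|\rho_h^{k+1}\le C\sum_k h(\int_A\rho_h^{k+1})^{1/2}$; one more Cauchy--Schwarz in $k$ (Jensen, with weights $h$, over $N\approx T/h$ slabs) turns $\sum_k h(\int_A\rho_h^{k+1})^{1/2}$ into $\le C(\sum_k h\int_A\rho_h^{k+1})^{1/2} = C(\int_0^T\int_A\rho_h)^{1/2}$, which is exactly the desired form with the $O(1)$ constant.

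\textbf{The Kantorovich term.} For $\int_A|\nabla\varphi_h^{k+1}|\,\rho_h^{k+1}$, again Cauchy--Schwarz on $A$ against $\rho_h^{k+1}\,dx$ gives $\big(\int_A\rho_h^{k+1}\big)^{1/2}\big(\int_\Omega|\nabla\varphi_h^{k+1}|^2\rho_h^{k+1}\big)^{1/2} = \big(\int_A\rho_h^{k+1}\big)^{1/2}W_2(\rho_h^{k+1},\trho_h^{k+1})$ by Brenier's theorem. Dividing the identity by $h$, summing over $k$, and using Cauchy--Schwarz in $k$ once more:
\[
\sum_k \frac1h W_2(\rho_h^{k+1},\trho_h^{k+1})\Big(\int_A\rho_h^{k+1}\Big)^{1/2}
\le \frac1h\Big(\sum_k W_2^2(\rho_h^{k+1},\trho_h^{k+1})\Big)^{1/2}\Big(\sum_k\int_A\rho_h^{k+1}\Big)^{1/2}.
\]
By \eqref{estimation distance}, $\sum_k W_2^2(\rho_h^{k+1},\trho_h^{k+1})\le Ch$, so the prefactor is $\frac1h(Ch)^{1/2} = C h^{-1/2}$; and $\big(\sum_k\int_A\rho_h^{k+1}\big)^{1/2} = h^{-1/2}\big(\int_0^T\int_A\rho_h\big)^{1/2}$. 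Multiplying through by $h$ (to undo the division) the $h$-powers combine to give $C\sqrt h\,\big(\int_0^T\int_A\rho_h\big)^{1/2}$. Adding the two contributions yields the bound $C(1+\sqrt h)\big(\int_0^T\int_A\rho_h\big)^{1/2}$, which is \eqref{tightquant}; taking $A=\Omega$ (or restricting to $[0,T]\times A$ and recalling $\rho_h$ is a probability density) and then letting the set shrink gives tightness as vector measures on $[0,T]\times\Omega$.

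\textbf{Main obstacle.} There is no deep obstacle here; the only care needed is bookkeeping of the powers of $h$ and making sure the discrete Cauchy--Schwarz/Jensen steps in the time index $k$ are set up so that the $\sqrt h$ from \eqref{estimation distance} and the $h^{-1/2}$ from converting $\sum_k h(\cdots)$ to $\int_0^T(\cdots)$ cancel correctly in the drift term and combine to a harmless $\sqrt h$ in the Kantorovich term. One should also note at the outset that $\rho_h(t,\cdot)$ is a genuine probability density, so $\int_0^T\int_\Omega\rho_h = T$, which makes the right-hand side of \eqref{tightquant} finite and is what ultimately lets Prokhorov's theorem apply after combining with the $L^1$-in-time-and-space bound on $\nabla P(\rho_h)$ from Corollary \ref{estimation BV}.
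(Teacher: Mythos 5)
Your argument is correct and is essentially the paper's own proof: both integrate the Euler--Lagrange identity \eqref{equality a.e 1} over $A$, apply Cauchy--Schwarz in space against $\rho_h^{k+1}\,dx$ (identifying $\int_\Omega|\nabla\varphi_h^{k+1}|^2\rho_h^{k+1}=W_2^2(\rho_h^{k+1},\trho_h^{k+1})$), then Cauchy--Schwarz in the time index together with \eqref{estimation distance}, \eqref{hyp V L2 bound} and \eqref{hyp V cv L2}. The only quibble is bookkeeping: since $\sum_k\int_A\rho_h^{k+1}=h^{-1}\int_0^T\int_A\rho_h$, the Kantorovich contribution comes out as $C\big(\int_0^T\int_A\rho_h\big)^{1/2}$ rather than $C\sqrt{h}\big(\int_0^T\int_A\rho_h\big)^{1/2}$, which is still dominated by the claimed bound $C(1+\sqrt{h})\big(\int_0^T\int_A\rho_h\big)^{1/2}$, so the conclusion stands.
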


\begin{proof}
Integrating  \eqref{equality a.e 1} on $(0,T)\times A$ together with Cauchy Schwarz inequality and \eqref{estimation distance}, we get (taking $N=\lceil \frac{T}{h } \rceil+1$, say) 
\[\begin{split}
\int_0^T \int_A \vert \nabla P(\rho_h) \vert  &\leqslant \sum_{k=0}^N \int_A \vert \nabla \varphi_h^{k+1} \vert \rho_h^{k+1} + \int_0^T \int_A \vert \nabla V[\trho_h^1] \vert \rho_h \\
&\leqslant \sum_{k=0}^N \Big(\int_\Omega \vert \nabla \varphi_h^{k+1} \vert^2 \rho_h^{k+1}\Big)^{1/2} \Big(\int_A \rho_h^{k+1}\Big)^{1/2} + \int_0^T \int_A \vert \nabla V[\trho_h^1] \vert \rho_h\\
& \leqslant  \Big(\sum_{k=0}^N W_2^2(\trho_h^{k+1}, \rho_h^{k+1})\Big)^{1/2} \Big(\int_0^T \int_A \rho_h(t,x) \mbox{d} x \mbox{d}t\Big)^{1/2}+ \int_0^T \int_A \vert \nabla V[\trho_h^1] \vert \rho_h\\
& \leqslant  C \sqrt{h} \Big(\int_0^T \int_A \rho_h(t,x) \mbox{d} x \mbox{d}t\Big)^{1/2}+ \int_0^T \int_A \vert \nabla V[\trho_h^1] \vert \rho_h.
\end{split}\]
Moreover, with Cauchy Schwarz inequality, \eqref{hyp V L2 bound} and \eqref{hyp V cv L2}, we also have
\[\begin{split}
\int_0^T \int_A \vert \nabla V[\trho_h^1] \vert \rho_h &\leqslant  \int_0^T \int_A \vert \nabla V[\rho_h]\vert \rho_h + \int_0^T \int_A \vert \nabla V[\trho_h^1] -\nabla V[\rho_h]\vert \rho_h \\
 & \leqslant C\Big(1+\sqrt{h}\Big) \Big(\int_0^T \int_A \rho_h(t,x) \mbox{d} x \mbox{d}t\Big)^{1/2}
\end{split}\]
which proves \eqref{tightquant}.  The tightness of $\nabla P(\rho_h)$ therefore immediately follows from that of $\rho_h$ and \eqref{tightquant}.

\end{proof}

We can summarize all of this in the next result:

\begin{thm}\label{cvgcesummary}
Up to a subsequence  $\rho_h$ converges strongly in $L^1((0,T) \times \Omega)$, $P(\rho_h)$ converges strongly to $P(\rho)$ in $L^1_{\loc}( (0,T) \times \Omega)$ and $\nabla P(\rho_h)$ converges to $\nabla P(\rho)$ narrowly in $\mathcal{M}((0,T) \times \Omega)$.
\end{thm}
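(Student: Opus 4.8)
The plan is to assemble the three assertions from the compactness machinery developed above; since the heavy lifting has already been done, the "proof" is mostly a matter of checking that all hypotheses are in place and then invoking the relevant abstract tools in the right order.

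First, for the strong $L^1((0,T)\times\Omega)$ convergence of $\rho_h$, I would apply Theorem \ref{Rossi-Savaré} with $B=L^1(\Omega)$, the pseudo-distance $g$ given by $W_2$ (extended by $+\infty$ off $\Pad(\Omega)$) and the integrand $\G$ built from $\F$, $\|P(\cdot)\|_{BV}$ and $M$. Lemma \ref{sublevels of G} guarantees that $\G$ is a normal coercive integrand. The first condition in \eqref{condition U} follows by combining the moment bound \eqref{estimation moment}, the internal energy bound \eqref{estimation fonctional} and the $W^{1,1}$ estimate \eqref{estimation gradient}; the second (equi-continuity in the $g$-metric) follows from \eqref{estimation distance} and the observations in Remark \ref{estimation trho}, as in \cite{DFM}. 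The theorem then yields a subsequence along which $\rho_h(t,\cdot)\to\rho(t,\cdot)$ strongly in $L^1(\Omega)$ in measure with respect to $t$, hence, after a further subsequence, for a.e. $t$; since each $\rho_h(t,\cdot)$ is a probability density one has $\|\rho_h(t,\cdot)-\rho(t,\cdot)\|_{L^1(\Omega)}\le 2$, so Lebesgue's dominated convergence theorem upgrades this to strong convergence in $L^1((0,T)\times\Omega)$, and, along a further subsequence, to a.e. convergence on $(0,T)\times\Omega$.

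Next, for $P(\rho_h)$: by continuity of $P$ and the a.e. convergence just obtained, $P(\rho_h)\to P(\rho)$ a.e., so by Vitali it suffices to establish uniform integrability in both variables. Assumption \eqref{hyp F} together with \eqref{estimation fonctional} gives a uniform $L^\infty((0,T),L^1(\Omega))$ bound, Corollary \ref{estimation BV} gives a uniform $L^1((0,T),W^{1,1}(\Omega))$ bound, and the Sobolev embedding $W^{1,1}(\Omega)\hookrightarrow L^{n/(n-1)}(\Omega)$ then yields a uniform $L^1((0,T),L^{n/(n-1)}(\Omega))$ bound. Applying Lemma \ref{interpolation} with $p=n/(n-1)$, so that $q=(n+1)/n>1$, gives a uniform bound on $P(\rho_h)$ in $L^{(n+1)/n}((0,T)\times\Omega)$, hence uniform integrability; Vitali's convergence theorem then gives $P(\rho_h)\to P(\rho)$ strongly in $L^1_{\loc}((0,T)\times\Omega)$. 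Finally, Corollary \ref{estimation BV} bounds the total variation of $\nabla P(\rho_h)$ uniformly, and the $L^1_{\loc}$ convergence of $P(\rho_h)$ forces $\nabla P(\rho_h)\rightharpoonup\nabla P(\rho)$ vaguely in $\mathcal{M}^n_{\loc}((0,T)\times\Omega)$, so in particular $\nabla P(\rho)\in\mathcal{M}^n((0,T)\times\Omega)$. To promote vague to narrow convergence I would invoke Prokhorov's theorem, for which tightness of the family $|\nabla P(\rho_h)|$ on $[0,T]\times\Omega$ is needed; this follows from the quantitative estimate \eqref{tightquant} of Lemma \ref{tightgradp}, which reduces tightness of $\nabla P(\rho_h)$ to tightness of $\rho_h$, and the latter is immediate from the uniform second-moment bounds \eqref{estimation moment} and Remark \ref{estimation trho}.

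The genuinely delicate point in this chain is the strong compactness of the nonlinear pressure $P(\rho_h)$: the Wasserstein estimates only furnish equi-continuity in a weak metric, whereas the nonlinearity requires a.e. convergence, so one really needs the Aubin--Lions--Rossi--Savar\'e mechanism together with the $W^{1,1}$ bound coming from the Euler--Lagrange equation, and then the interpolation Lemma \ref{interpolation} to rule out concentration in the joint variables. The tightness of the gradient measures at infinity (or near $\partial\Omega$) is the secondary subtlety, entirely handled by Lemma \ref{tightgradp}; everything else is bookkeeping.
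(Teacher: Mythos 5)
Your proposal is correct and follows essentially the same route as the paper: Rossi--Savar\'e with the same choice of $B$, $g$ and $\G$ for the strong $L^1$ compactness of $\rho_h$, then the $L^\infty_t L^1_x$ and $L^1_t W^{1,1}_x$ bounds, Sobolev embedding, Lemma \ref{interpolation} and Vitali for $P(\rho_h)$, and Lemma \ref{tightgradp} plus Prokhorov for the narrow convergence of $\nabla P(\rho_h)$. The explicit domination $\|\rho_h(t,\cdot)-\rho(t,\cdot)\|_{L^1(\Omega)}\le 2$ is a nice touch that the paper leaves implicit, but otherwise the arguments coincide.
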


\subsection{End of the proof of theorem \ref{eqution P2}}

In this section we finish the proof of  theorem \ref{eqution P2}. We have to pass to the limit in all terms in proposition \ref{discreet equation}. The linear term (with time derivative) and the diffusion term converge to the desired result because $\trho_{h}^2$ converges to $\rho$  in $L^{\infty} ([0,T] , W_2)$ and $\nabla P(\rho_{h})$ converges to $\nabla P(\rho)$ narrowly in $\mathcal{M}^n([0,T] \times \Omega)$. Remainder term goes to $0$ when $h$ goes to $0$ because of \eqref{estimation distance}. 

So we just have to check the convergence in the  transport terms, in what follows the test-function $\phi$ belongs again to  $\mathcal{C}^\infty_c([0,T) \times \Rn)$. 

\begin{itemize}
\item{\it term in $W$: \it} We have to show:
$$\int_0^T \int_{\Omega} \trho_{h}^2(t,x) W[\rho_h(t-h)](x) \cdot \nabla \phi(t,x) \,dxdt \rightarrow \int_0^T \int_{\Omega} \rho(t,x) W[\rho(t,\cdot)](x) \cdot \phi(t,x) \,dxdt.$$

We first have
\begin{eqnarray*}
\int_0^T \int_{\Omega} \trho_{h}^2(t,x) W[\rho_h(t-h)](x) \cdot \nabla \phi(t,x) \,dxdt & =  & \int_0^T \int_{\Omega} \trho_{h}^2(t,x) W[\trho_h^2(t)](x) \cdot \nabla \phi(t,x) \,dxdt\\
& + &\int_0^T \int_{\Omega} \trho_{h}^2(t,x) ( W[\rho_h(t-h)](x)- W[\trho_h^2(t)](x) )\cdot \nabla \phi(t,x) \,dxdt.
\end{eqnarray*}

The second term in the right hand side goes to zero when $h$ goes to $0$. Indeed,
\begin{multline*}
\Big \vert \int_0^T \int_{\Omega} \trho_{h}^2(t,x) ( W[\rho_h(t-h)](x)- W[\trho_h^2(t)](x) )\cdot \nabla \phi(t,x) \,dxdt \Big \vert \\ 
 \leqslant  C\int_0^T \left( \int_{\Omega} \trho_{h}^2(t,x) | W[\rho_h(t-h)](x)- W[\trho_h^2(t)(x)] |^2 \,dx \right)^{1/2} dt,
\end{multline*}
then using \eqref{hyp W cv L2},
\begin{eqnarray*}
\Big \vert \int_0^T \int_{\Omega} \trho_{h}^2(t,x) ( W[\rho_h(t-h)](x)- W[\trho_h^2(t)](x) )\cdot \nabla \phi(t,x) \,dxdt \Big \vert & \leqslant & C\int_0^T W_2(\rho_h(t-h),\trho_h^2(t)) \, dt \\
&\leqslant & C\sum_{k=0}^{N-1} \int_{t_k}^{t_{k+1}}W_2(\rho_h^k,X_h^k(t-t_k)_{\#}\rho_h^k) \,dt\\
& \leqslant & CTh,
\end{eqnarray*}
because of \eqref{estimation distance transport libre}. Moreover,  using   \eqref{hyp V cv L2}, we get
\begin{multline*}
\Big \vert \int_0^T \int_{\Omega} (\trho_{h}^2(t,x) W[\trho_h^2(t)](x) \cdot \nabla \phi(t,x) - \rho(t,x) W[\rho(t)](x) \cdot \nabla \phi(t,x)) \,dxdt\Big  \vert\\
  \leqslant  C  \int_0^T \int_{ \Omega} \trho_{h}^2(t,x)\vert W[\trho_h^2(t)](x)-W[\rho(t)](x)) \vert \,dxdt\\
  +   \Big \vert \int_0^T \int_{\Omega} ( \rho(t,x)- \trho_{h}^2(t,x))  W[\rho(t)](x) \cdot \nabla \phi(t,x)  \,dxdt \Big \vert\\
 \leqslant  CT\sup_{t\in [0,T]} W_2(\trho_h^2(t),\rho(t))+ \Big \vert \int_0^T \int_{\Omega} ( \rho(t,x)- \trho_{h}^2(t,x))  W[\rho(t)](x) \cdot \nabla \phi(t,x)  \,dxdt \Big \vert
\end{multline*}
the first term in the right hand-side converges to $0$ because of \eqref{cvuwass}. As for the second one, it also converges to $0$, because  $W[\rho] \cdot \nabla \phi$ belongs to $L^{\infty}((0,T)\times \Omega)$ and $\trho_h^2$ is uniformly integrable by remark \ref{estimation trho} and the superlinearity of $F$, hence, up to a subsequence it converges to $\rho$ weakly in $L^1((0,T)\times \Omega)$.

\item{\it term in $\nabla V$: \it} We claim that
$$ h\sum_{k=0}^{N-1}\int_{\Omega} \nabla V[\trho_h^{k+1}] (x)\cdot  \nabla \phi(t_{k},x)  \rho_h^{k+1} \, dx \rightarrow \int_0^T \int_\Omega \nabla V[\rho(t,\cdot)](x) \cdot  \nabla \phi(t,x)  \rho(t,x) \, dxdt.$$
The proof is the same as the  previous one   for $W$, using \eqref{hyp V cv L2}, \eqref{hyp V Linffty bound} and the convergence of $\rho_h$ to $\rho$ in $L^1((0,T)\times \Omega))\cap L^{\infty}((0,T), W_2)$.

\end{itemize}

\section{On extension to systems and uniqueness}\label{sec-concl}

The splitting transport-JKO scheme described above, can easily be adapted, under suitable assumptions to the case of systems for the evolution of $N$ species coupled by nonlocal drifts:
\begin{equation}\label{systgal}
\partial_t \rho_i -\Delta P_i(\rho_i)-\dive(\rho_i U_i[\rho_1, \cdots, \rho_N])=0, \; \rho_i(0,.)=\rho_{i,0}, \; i=1, \cdots, N,
\end{equation}
where $P_i(s)=s F'_i(s)-F_i(s)$ is the pressure associated to a strictly convex superlinear function $F_i$ with corresponding internal energy $\F_i(\rho_i):=\int F_i(\rho_i(x)) dx$. 
Decomposing each drift  $U_i[\rho_1, \cdots, \rho_N]=\nabla V_i [\rho_1, \cdots, \rho_N] -W_i[\rho_1, \cdots, \rho_N]$ with $\dive(W_i[\rho_1, \cdots, \rho_N])=0$ and under similar assumptions as in paragraph \ref{sec-assump}, one can show, by similar arguments as above, convergence as $h\to 0$ to a solution of \eqref{systgal} of the following  splitting scheme. 

\medskip

Starting form $\rho_{i,h}^0=\rho_{i,0}$ and given $\rho_h^k=(\rho_{1,h}^k, \cdots \rho_{N,h}^k)$ we find $\rho_h^{k+1}=(\rho_{1,h}^{k+1}, \cdots \rho_{N,h}^{k+1})$ by:
\begin{itemize}

\item setting $\trho_{i,h}^{k+1}=X_{i,h}^k(h,.)_\# \rho_{i,h}^k$ where 
\[\partial_t X_{i,h}^k = W_i[\rho_h^k] \circ X_{i,h}^k,\; X_{i,h}^k(0,\cdot) = \id,\]

\item defining  $\trho_{h}^{k+1}=(\trho_{1,h}^{k+1}, \cdots,  \trho_{N,h}^{k+1})$, $\rho_h^{k+1}=(\rho_{1,h}^{k+1}, \cdots \rho_{N,h}^{k+1})$ is obtained by the semi-implicit JKO scheme:
\[ \rho_{i,h}^{k+1} =\argmin_{\rho_i \in \Paad(\Omega)}   \left\{ W_2^2(\rho_i,\tilde{\rho}_{i,h}^{k+1}) +2h\left( \F_i(\rho_i) +\int_{\Omega}  V_i [\tilde{\rho}_h^{k+1}] \rho_i \right)\right\}.\]

\end{itemize}


Finally, let us say a few words on uniqueness which we have not addressed here, but which can be obtained at least in two ways: either by assuming some displacement semiconvexity of the internal energy and proving some exponential in time contraction estimate on  the $W_2$ distance  between two solutions (see \cite{DFF,L}), or by assuming some nondegeneracy of the diffusion and establishing some $H^{-1}$ contraction estimate (see \cite{CL}).




\end{document}